\documentclass[journal,twoside,web]{ieeecolor}

\usepackage{generic}
\usepackage{cite}
\usepackage{amsmath,amssymb,amsfonts}
\usepackage{algorithmic}
\usepackage{graphicx}
\usepackage{textcomp}
\usepackage{mathbbol}
\DeclareMathOperator*{\argmin}{argmin}

\usepackage{comment}
\usepackage{balance}
\usepackage{graphicx}
\usepackage{subfigure}
\usepackage{epsfig} 
\usepackage{cite}
\usepackage{lcsys}

\usepackage[colorlinks = true,
            linkcolor = blue,
            urlcolor  = blue,
            citecolor = blue,
            anchorcolor = blue]{hyperref}





\usepackage{etoolbox}
\makeatletter
\@ifundefined{color@begingroup}%
  {\let\color@begingroup\relax
   \let\color@endgroup\relax}{}%
\def\fix@ieeecolor@hbox#1{%
  \hbox{\color@begingroup#1\color@endgroup}}
\patchcmd\@makecaption{\hbox}{\fix@ieeecolor@hbox}{}{\FAILED}
\patchcmd\@makecaption{\hbox}{\fix@ieeecolor@hbox}{}{\FAILED}

\newtheorem{theorem}{Theorem}
\newtheorem{lemma}{Lemma}
\newtheorem{proposition}{Proposition}

\newtheorem{definition}{Definition}
\newtheorem{remark}{Remark}

\usepackage[nameinlink]{cleveref}
\crefname{equation}{}{}
\crefname{theorem}{Theorem}{Theorems}
\crefname{corollary}{Corollary}{Corollaries}
\crefname{example}{Example}{Examples}
\crefname{assumption}{Assumption}{Assumptions}
\crefname{lemma}{Lemma}{Lemmas}
\crefname{proposition}{Proposition}{Propositions}
\crefname{figure}{Figure}{Figures}
\crefname{table}{Table}{Tables}
\crefname{fact}{Fact}{Facts}
\crefname{conjecture}{Conjecture}{Conjectures}
\crefname{section}{Section}{Sections}
\crefname{appendix}{Appendix}{Appendices}
\crefname{definition}{Definition}{Definitions}
\Crefname{equation}{}{}
\Crefname{theorem}{Theorem}{Theorems}
\Crefname{corollary}{Corollary}{Corollaries}
\Crefname{example}{Example}{Examples}
\Crefname{lemma}{Lemma}{Lemma}
\Crefname{proposition}{Proposition}{Proposition}
\Crefname{figure}{Figure}{Figures}
\Crefname{table}{Table}{Tables}
\Crefname{section}{Section}{Sections}
\Crefname{definition}{Definition}{Definitions}
\Crefname{appendix}{Appendix}{Appendices}

\newcommand{\ini}{\textnormal{ini}}
\newcommand{\col}{\textnormal{col}}
\newcommand{\f}{\textnormal{F}}
\newcommand{\m}{\textnormal{m}}
\newcommand{\ob}{\textnormal{o}}
\newcommand{\p}{\textnormal{P}}
\newcommand{\D}{\textnormal{d}}
\newcommand{\z}{\textnormal{z}}

\newcommand{\R}{\textnormal{r}}

\newcommand{\tr}{{{\mathsf T}}}

\newcommand{\method}[1]{\texttt{#1}}
\usepackage{siunitx}
\usepackage{threeparttable}
\usepackage{booktabs}
\setlength{\abovedisplayskip}{3pt}
\setlength{\belowdisplayskip}{3pt}
\pagestyle{empty} 

\begin{document}

\def\BibTeX{{\rm B\kern-.05em{\sc i\kern-.025em b}\kern-.08em
    T\kern-.1667em\lower.7ex\hbox{E}\kern-.125emX}}
\markboth{\journalname, VOL. XX, NO. XX, XXXX 2017}
{Author \MakeLowercase{\textit{et al.}}: Preparation of Papers for IEEE Control Systems Letters (August 2022)}

\title{Willems' Fundamental Lemma for Nonlinear Systems with Koopman Linear Embedding}

\author{
Xu Shang$^{1}$,
Jorge Cort\'es$^{2}$, \IEEEmembership{Fellow, IEEE}, 
and Yang Zheng$^{1}$, \IEEEmembership{Member, IEEE}
\thanks{This work is supported by NSF CMMI 2320697, NSF CAREER 2340713, and an Early Career Faculty Development Award from~the Jacobs School of Engineering, UC San Diego. The authors would like to thank Masih Haseli for discussions on Koopman operator~theory. }
\thanks{$^{1}$S. Xu and Y. Zheng are with the Department of Electrical and Computer Engineering, University of California San Diego; \texttt{\{x3shang,zhengy\}@ucsd.edu}}
\thanks{$^{2}$J. Cort\'es is with the Department of Mechanical and Aerospace Engineering, University of California San Diego, \texttt{cortes@ucsd.edu}}
\vspace{-5mm}
}


\maketitle
 \thispagestyle{plain}

\begin{abstract}
Koopman operator theory and Willems’ fundamental lemma both can provide (approximated) data-driven linear representation for nonlinear systems. However, choosing lifting functions for the Koopman operator is challenging, and the quality of the data-driven model from Willems’ fundamental lemma has no guarantee for general nonlinear systems. In this paper, we extend Willems’ fundamental lemma for a class of nonlinear systems that admit a \textit{Koopman linear embedding}. We first characterize the relationship between the trajectory space of a nonlinear system and that of its Koopman linear embedding. We then prove that the trajectory space of Koopman linear embedding can be formed by a linear combination of rich-enough trajectories from the nonlinear system. Combining these two results leads to a data-driven representation of the nonlinear system, which bypasses the need for the lifting functions and thus eliminates the associated bias errors. Our results illustrate that both the \textit{width} (more trajectories) and \textit{depth} (longer trajectories) of the trajectory library are important to ensure the accuracy of the data-driven model. 
\end{abstract}

\begin{IEEEkeywords}
Data-driven control; Willems' Fundamental Lemma; Nonlinear systems; Koopman Lifting
\end{IEEEkeywords}

\section{Introduction}
\label{sec:introduction}
Designing controllers for nonlinear systems with approximated linear representations has gained increasing interest. Linear approximations enable the utilization of linear system tools and facilitate computationally efficient model predictive control schemes. 
Both Koopman operator theory~\cite{koopman1931hamiltonian} and Willems' fundamental lemma \cite{willems2005note} can be applied~to~construct (approximated) linear representations of nonlinear systems from input and output data, which have shown promising performance in many practical applications \cite{haggerty2023control, elokda2021data,Wang2023-DeeP-LCC,shang2024decentralized}.

Koopman operator theory is originally developed for autonomous systems with no input \cite{koopman1931hamiltonian}. There are different~Koopman operator schemes to handle controlled nonlinear systems, \emph{e.g.}, taking the control sequence as an extended state~\cite{korda2018linear}~or considering the control sequence as extra parameters \cite{haseli2023modeling}. One key step is to \textit{lift} the state space into a higher-dimensional space, in which the lifted state evolves (approximately) in a linear way. This idea leads to a rigorous framework~of~Koopman operator theory for autonomous systems \cite{mezic2005spectral}, and extensions for controlled systems are under extensive development~\cite{mauroy2020koopman}. With (approximated) linear representations using the Koopman operator available, many control techniques have been applied, such as linear optimal control and model~predictive control \cite{korda2018linear,mauroy2020koopman}.  In all these methods, the accuracy of the Koopman-based approximations depends critically on the lifting functions, and choosing the right set is challenging~\cite{haseli2021learning}. 

Willems' fundamental lemma for~linear time-invariant (LTI) systems shows that a rich-enough trajectory library is sufficient to produce a direct data-driven representation \cite{willems2005note} of the system evolution. Its wide range of applicability has motivated the search for extensions, including special classes of nonlinear systems, such as Hamerstein and Wiener systems \cite{berberich2020trajectory}, bilinear systems \cite{yuan2022data,markovsky2022data}, and certain polynomial systems~\cite{guo2021data}. This data-driven representation can be utilized for linear controller design \cite{berberich2020robust} and also model predictive control \cite{coulson2019data}. When dealing with non-deterministic or nonlinear systems, it becomes necessary to include suitable regularization terms in predictive control to ensure its performance and increase the size of the trajectory library to construct a good data-driven representation~\cite{dorfler2022bridging, breschi2023data, shang2024convex}. Although the benefits of increasing the width of the trajectory library (\emph{i.e.}, collecting more trajectories) are well-recognized in the literature, the importance of enlarging its depth (\emph{i.e.}, extending the trajectory length) is less discussed.

In this paper, we aim to develop an extended Willems' fundamental lemma for nonlinear systems that admit a lifted linear representation under the Koopman operator, which we call \textit{Koopman linear embedding} (see \Cref{def:accu-Koop}). Unlike~previous studies \cite{korda2018linear,berberich2020trajectory, yuan2022data,markovsky2022data}, our direct data representation requires no prior knowledge of the lifting functions in the Koopman linear embedding or the nonlinearity of the system. One key idea in our approach is to establish an exact relationship between the trajectory spaces of the nonlinear system and its associated Koopman linear embedding. In general, the latter contains the former. We provide a necessary and sufficient condition for a trajectory of the Koopman linear embedding to be a trajectory of the original nonlinear system (\cref{prop:traj-rela}). Motivated by \cite[Def. 1]{Berberich2022}, we introduce a new notion of lifted excitation for nonlinear systems which accounts for the lifted state in Koopman linear embedding. We show the behavior of Koopman linear embedding can be fully captured by a linear combination of rich enough trajectories from the nonlinear system (\cref{prop:exist-traj-lib}). We finally establish a data-driven representation adapted from Willems' fundamental lemma for nonlinear systems with a Koopman linear embedding (\cref{theorem:Koopman-to-data-representation}). Thus, we can directly utilize the simple-to-build data-driven representation and bypass the need to choose lifting functions. 

Our data-driven representation can be directly utilized in predictive control. Our approach also illustrates the importance of the \textit{width} and \textit{depth} of the trajectory library, which depends on the ``hidden'' dimension of the Koopman linear embedding. Both collecting more trajectories (increasing the width) and utilizing longer initial trajectories (increasing the depth) are critical for the data-driven representation of nonlinear systems. 

The remainder of this paper is structured as follows.~\Cref{sec:preliminaries} reviews Koopman linear embedding and Willems' fundamental lemma. \Cref{sec:equal-DD-Koop} shows that a Koopman linear embedding of the nonlinear system leads to a direct data-driven representation. \Cref{sec:results} validates our theoretical findings via numerical simulations. We conclude the paper with \Cref{sec:conclu}. 

\textit{Notation:} 
Given a series of vectors $a_1, \ldots, a_n$ and matrices $A_1, \ldots, A_n$ with the same column dimension, we denote $\col(a_1, \ldots, a_n) := \begin{bmatrix}
    a_1^\tr, \ldots, a_n^\tr
\end{bmatrix}^\tr$ and $\col(A_1,\ldots, A_n) := \begin{bmatrix}
    A_1^\tr, \ldots, A_n^\tr
\end{bmatrix}^\tr$. We denote the quadratic form $a^\tr X a$ as $\|a\|_X^2$ 
and $\mathrm{diag}(b_1,...,b_n)$ as a diagonal matrix with $b_1,...,b_n$ at its diagonal entries. We use $\|A\|_F$ to represent the Frobenius norm of the matrix $A$. Collecting a length-$T$ data sequence $v = \col(v_0, \ldots, v_{T-1})$, we represent $v_{p:q} := \col(v_p, \ldots, v_q)$ where $p,q \in \mathbb{Z}$ and $T > q \ge p \ge 0$. 

\section{Preliminaries and Problem Statement}
\label{sec:preliminaries}
\subsection{Koopman linear models for nonlinear systems}
Consider a discrete-time nonlinear system
\begin{equation}
\label{eqn:nonlinear-1}
x_{k+1} = f(x_k, u_k), \quad
y_k = g(x_k, u_k),
\end{equation}
where $x_k \in \mathbb{R}^n, u_k \in \mathbb{R}^m$ and $y_k\in \mathbb{R}^p$ are the state,~input, and output of the system at time $k$, respectively. One key idea of Koopman operator is to lift the state $x_k$ of the original nonlinear system to a higher-dimensional space via a set~of~lifting functions (often referred to as observables) \cite{mezic2005spectral}, where the evolution of these observables becomes (approximately) linear. 

In this paper, we consider an important case of Koopman linear embedding for nonlinear systems.

\begin{definition}[Koopman Linear Embedding] \label{def:accu-Koop}
The nonlinear system \cref{eqn:nonlinear-1} admits a Koopman linear embedding if there exists a set of linearly independent lifting functions $\phi_1(\cdot),$ $ \ldots, \phi_{n_\z}(\cdot):\mathbb{R}^n \rightarrow \mathbb{R}$ such that the lifted state  
\begin{equation} \label{eq:Koopman-lifting}
\Phi(x_k):= \col(\phi_1(x_k), \ldots, \phi_{n_\z}(x_k)) \in \mathbb{R}^{n_\z},  
\end{equation}
propagates linearly along all trajectories of \eqref{eqn:nonlinear-1} and the output $y_k$ is a linear map of $\Phi(x_k)$ and $u_k$. 
\end{definition} 

For a nonlinear system admitting a Koopman linear embedding \cref{eq:Koopman-lifting}, the new lifted state $z_k = \Phi(x_k) \in \mathbb{R}^{n_\z}$ satisfies 
\begin{equation}
    \label{eqn:Koop-rep}
        z_{k+1} = A z_k + B u_k, \quad 
        y_k = C z_k + D u_k,
    \end{equation} 
with matrices $A, B, C$ and $D $ having appropriated dimensions. Note that we normally have $n_\z \gg n$ and the matrix pair $(A, B)$ and $(C, A)$ in \eqref{eqn:Koop-rep} may not be controllable or observable. Even when an exact Koopman linear embedding~does not exist, many existing studies (especially in predictive~control) often use the linear model \cref{eqn:Koop-rep} to approximate the dynamics of the observables \cref{eq:Koopman-lifting}; see \cite{korda2018linear} for details.  

After choosing the observables \cref{eq:Koopman-lifting}, we can compute the matrices $A, B, C$ and $D$ for the linear model \cref{eqn:Koop-rep} using extended dynamic model decomposition (EDMD) \cite{mauroy2020koopman}. 
We organize the measured input-state-output data sequence of \eqref{eqn:nonlinear-1} as 
\[
    \begin{aligned}
    \mathrm{X}  = \begin{bmatrix}
        x_0, \ldots, x_{n_\D-2} 
    \end{bmatrix}, \quad &\mathrm{X}^+ = \begin{bmatrix}
        x_1, \ldots, x_{n_\D-1}
    \end{bmatrix}, \\
    \mathrm{U} = \begin{bmatrix}
        u_0, \ldots, u_{n_\D-2}
    \end{bmatrix}, \quad & Y = \begin{bmatrix}
        y_0, \ldots, y_{n_\D-2}
    \end{bmatrix}.
    \end{aligned}
    \]
With the lifting functions \cref{eq:Koopman-lifting}, we compute the lifted state~as 
    \[
    \begin{aligned}
    \mathrm{Z}  = \begin{bmatrix}\Phi(x_0), \ldots, \Phi(x_{n_\D-2})
    \end{bmatrix}, \quad 
    \mathrm{Z}^+  = \begin{bmatrix}\Phi(x_1), \ldots, \Phi(x_{n_\D-1})\end{bmatrix}.
    \end{aligned}
    \]
Then, we obtain the matrices $A$, $B$, $C$ and $D$ via two least-squares approximations:
 \begin{equation} \label{eq:least-squares-model}
{\small  \begin{aligned}
   (A, B) & \in \argmin_{A, B} \; \|\mathrm{Z}^+ - A\mathrm{Z} - B \mathrm{U} \|_F^2, \\ 
    (C, D) & \in \argmin_{C, D} \; \|\mathrm{Y} - C \mathrm{Z} - DU \|_F^2. 
\end{aligned}
}
    \end{equation}
It is not necessary to collect the data points in sequence and we can also use data pairs $(x_i, x_i^+, u_i, y_i)$ where $x_i^+ = f(x_i, u_i), y_i = g(x_i, u_i), i = 0, \ldots, n_\D-1$ (see \cite{korda2018linear} for details).

The choice of observables affects \cref{eq:least-squares-model} significantly. Even if a Koopman linear embedding exists for \cref{eqn:nonlinear-1}, we may not know the correct observables \cref{eq:Koopman-lifting} for such a Koopman linear embedding. An inexact choice can lead to significant modeling errors \cite{haseli2021learning}. In the literature, common choices for~\cref{eq:Koopman-lifting} include Gaussian kernel, polyharmonic splines, and thin plate splines~\cite{mauroy2020koopman}. However, none of them can guarantee an exact linear model even when a Koopman linear embedding exists.   

\subsection{Willems' Fundamental Lemma}
\label{subsec:fundamental lemma}
Willems' fundamental lemma is established for linear time-invariant (LTI) system of the form 
\begin{equation}
    \label{eqn:LTI-system}
        x_{k+1}  = A_1 x_k + B_1 u_k,  \quad
        y_k = C_1x_k + D_1 u_k,
\end{equation}
where the state, input and output at time $k$ are denoted as $x_k \in \mathbb{R}^{\tilde{n}}$, $u_k \in \mathbb{R}^{\tilde{m}}$ and $y_k \in \mathbb{R}^{\tilde{p}}$, respectively. We consider system \cref{eqn:LTI-system} from the behavioral (\emph{i.e.}, trajectory) perspective. The key idea is that a linear combination of rich enough offline trajectories of \cref{eqn:LTI-system} can represent its whole trajectory~space. 

Let us recall the notion of persistent excitation \cite{willems2005note}. 
\begin{definition}[Persistently exciting]
\label{def:PE-condiiton}
The length-$T$ sequence $\omega = \col(\omega_0,\ldots, \omega_{T-1})$ is persistently exciting (PE) of order $L$ if its Hankel matrix
\begingroup
    \setlength\arraycolsep{2pt}
\def\arraystretch{0.85} 
{
\[ 
\mathcal{H}_L(\omega) = \begin{bmatrix}
    \omega_0 & \omega_1 & \cdots &  \omega_{T-L} \\
    \omega_1 & \omega_2 & \cdots & \omega_{T-L+1} \\
    \vdots & \vdots & \ddots & \vdots \\
    \omega_{L-1} & \omega_L &\cdots  & \omega_{T-1}
\end{bmatrix}
\]
 has full row rank.
}
\endgroup 

\end{definition}

\vspace{1mm}

With the pre-collected input-state-output data in sequence, \emph{i.e.}, $u_\D = \col(u_0, \ldots, u_{n_\D-1}), x_\D = \col(x_0, \ldots, x_{n_\D-1})$ and $y_\D = \col(y_0,\ldots, y_{n_\D-1})$,  the following Willems' fundamental lemma is adapted from \cite[Theorem 1]{van2020willems}.
\begin{lemma}[Willems' fundamental lemma]
\label{thm:fundamental-lemma}
    \!Consider~the LTI system \eqref{eqn:LTI-system}. Assume the Hankel matrix formed by its pre-collected trajectory $H_0 := \col(\mathcal{H}_1(x_{\D, 0:n_\D-L}), \mathcal{H}_L(u_{\D}))$ has full row rank. Then, a length-$L$ input-output data sequence $\col(u, y) \in \mathbb{R}^{(\tilde{m}+\tilde{p})L}$ is a valid trajectory of~\cref{eqn:LTI-system} if and only if there exists $g \in \mathbb{R}^{n_\D-L+1}$ such~that
    \[
    \begin{bmatrix}
        \mathcal{H}_L(u_\D) \\
        \mathcal{H}_L(y_\D)
    \end{bmatrix} g = 
    \begin{bmatrix}
        u \\ y
    \end{bmatrix}.
    \]
\end{lemma}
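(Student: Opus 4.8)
The plan is to establish the two implications separately; notably, only the ``only if'' direction uses the full-row-rank hypothesis on $H_0$.

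\emph{Sufficiency (``if'').} I would rely purely on the linearity of \eqref{eqn:LTI-system}. By construction, for each index $i$ the pair formed by the $i$-th column of $\mathcal{H}_L(u_\D)$ and the $i$-th column of $\mathcal{H}_L(y_\D)$ is a length-$L$ input-output trajectory of \eqref{eqn:LTI-system}, generated from the state sequence starting at $x_i$. Since \eqref{eqn:LTI-system} is LTI, any linear combination of trajectories is again a trajectory, with underlying state sequence equal to the corresponding linear combination of the individual state sequences. Hence, if $\col(u, y) = \col(\mathcal{H}_L(u_\D), \mathcal{H}_L(y_\D))\, g$ for some $g$, then $\col(u, y)$ is a valid length-$L$ trajectory; no rank assumption is needed here.

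\emph{Necessity (``only if'').} First I would record the closed-form input-output map over a horizon of length $L$: if $\col(u, y)$ is a length-$L$ trajectory of \eqref{eqn:LTI-system}, then there exists an initial state $x_\ini \in \mathbb{R}^{\tilde{n}}$ with $y = \mathcal{O}_L x_\ini + \mathcal{T}_L u$, where $\mathcal{O}_L := \col(C_1, C_1 A_1, \ldots, C_1 A_1^{L-1})$ is the extended observability matrix and $\mathcal{T}_L$ is the block-lower-triangular Toeplitz matrix built from $D_1$ and the Markov parameters $C_1 A_1^{j-1} B_1$. The same identity holds column-wise for the pre-collected data: the $i$-th column of $\mathcal{H}_L(y_\D)$ equals $\mathcal{O}_L x_i + \mathcal{T}_L$ applied to the $i$-th column of $\mathcal{H}_L(u_\D)$. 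Next I would invoke the rank assumption: since $H_0 = \col(\mathcal{H}_1(x_{\D, 0:n_\D-L}), \mathcal{H}_L(u_\D))$ has full row rank, the equation $H_0 g = \col(x_\ini, u)$ admits a solution $g \in \mathbb{R}^{n_\D - L + 1}$. This $g$ achieves $\mathcal{H}_1(x_{\D, 0:n_\D-L})\, g = x_\ini$ and $\mathcal{H}_L(u_\D)\, g = u$ simultaneously. Taking the linear combination of the column-wise identity with weights $g$ then yields $\mathcal{H}_L(y_\D)\, g = \mathcal{O}_L x_\ini + \mathcal{T}_L u = y$, so $\col(\mathcal{H}_L(u_\D), \mathcal{H}_L(y_\D))\, g = \col(u, y)$, as required.

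I expect the conceptual crux — and the only place the hypothesis is genuinely used — to be the simultaneous matching of the initial state $x_\ini$ and the input $u$ by a single vector $g$: full row rank of $H_0$ is exactly the condition guaranteeing $\col(x_\ini, u)$ lies in the column span of $H_0$, and once both the initial state and the input are reproduced, the output is forced by the deterministic affine-in-$(x_\ini,u)$ map that the data columns also satisfy. A minor technical point to dispatch carefully is that any length-$L$ input-output trajectory of \eqref{eqn:LTI-system} does possess an underlying initial state (existence is all that is needed; uniqueness, which would require observability, is irrelevant), and that the matrices $\mathcal{O}_L$ and $\mathcal{T}_L$ appearing in the test trajectory and in every data column are literally identical, so that the substitution in the last step is legitimate.
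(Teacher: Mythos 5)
Your proof is correct. The paper does not prove \Cref{thm:fundamental-lemma} itself but cites it from \cite[Theorem 1]{van2020willems}, and your argument is exactly the standard one used there: sufficiency from linearity of the trajectory set, and necessity from the column-wise identity $\mathcal{H}_L(y_\D) = \mathcal{O}_L \mathcal{H}_1(x_{\D,0:n_\D-L}) + \mathcal{T}_L \mathcal{H}_L(u_\D)$ together with the full-row-rank condition on $H_0$, which lets a single $g$ match both the initial state and the input. Your observations that only existence (not uniqueness) of the initial state is needed and that the rank hypothesis enters only in the necessity direction are both accurate and consistent with the paper's later use of the same mechanism in \Cref{prop:exist-traj-lib}.
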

\vspace{5pt}

\Cref{thm:fundamental-lemma} does not require the controllability of \cref{eqn:LTI-system} since it directly imposes a full-rank condition on $H_0$ that involves the state sequence. If \eqref{eqn:LTI-system} is controllable, then persistent excitation of order $L + \tilde{n}$ for the input sequence $u_\D$ is sufficient to guarantee the full rank of $H_0$ \cite{willems2005note}. Utilizing \Cref{thm:fundamental-lemma}, we can build a data-driven representation for system~\eqref{eqn:LTI-system}. We use $u_\ini = \col(u_{k-T_\ini}, \ldots, u_{k-1})$ and $u_\f=\col(u_k, \ldots, u_{k+N-1})$ to represent the most recent past input trajectory of length-$T_\ini$ and the future input trajectory of length-$N$ 
and $L = T_{\ini}+N$ (similarly for $y_\ini, y_\f$). Let us partition the Hankel matrix by its first $T_\ini$ rows (\emph{i.e.}, $U_\p, Y_\p$) and the last $N$ rows (\emph{i.e.}, $U_\f, Y_\f$)~as 
\[ {\small
\begin{bmatrix}
    U_\p \\ U_\f
\end{bmatrix}
:= \mathcal{H}_L(u_\D), \quad 
\begin{bmatrix}
Y_\p \\ Y_\f
\end{bmatrix}
:= \mathcal{H}_L(y_\D)}.
\]

From \Cref{thm:fundamental-lemma}, $\col(u_\ini, y_\ini, u_\f, y_\f)$ is a valid trajectory of~\cref{eqn:LTI-system} if and only if there exists $g \in \mathbb{R}^{n_\D-T_\ini -N+1}$ such that 
\begin{equation}
\label{eqn:DD-rep}
\col(U_\p, Y_\p, U_\f, Y_\f) g = \col(u_\ini, y_\ini, u_\f, y_\f). 
\end{equation}
Furthermore, if \cref{eqn:LTI-system} is observable and $T_\ini$ is no smaller than its observability index, then $y_\f$ in \cref{eqn:DD-rep} is unique given an initial trajectory $(u_{\ini}, y_{\ini})$ and any future input $u_\f$ \cite{coulson2019data}. Intuitively, if \cref{eqn:LTI-system} is observable, the initial trajectory $(u_{\ini}, y_{\ini})$ allows us to uniquely determine the corresponding initial state. This data-driven representation \cref{eqn:DD-rep} has been widely used in predictive control \cite{coulson2019data} with many successful applications \cite{shang2024decentralized,Wang2023-DeeP-LCC,elokda2021data}.   

\subsection{Problem Statement}
In this paper, we aim to extend the data-driven representation \cref{eqn:DD-rep} from LTI systems to nonlinear systems \cref{eqn:nonlinear-1} that admit a Koopman linear embedding. One may be tempted to directly apply Willems' fundamental lemma to the Koopman linear model \cref{eqn:Koop-rep} and get a similar data-driven representation as \cref{eqn:DD-rep}. However, there are two unsolved challenges for this process: 
   1) the Koopman linear model \cref{eqn:Koop-rep} may be neither controllable nor observable; 
  2)  the behavior space of the Koopman linear model~\cref{eqn:DD-rep} is much larger than the behavior space of the original nonlinear system~\cref{eqn:nonlinear-1}. 

We propose two innovations to resolve the challenges above. 1) We first characterize the relationship between the behavior space of the Koopman linear model \cref{eqn:DD-rep} and that of the original nonlinear system \cref{eqn:nonlinear-1}. A key insight of this characterization is to guarantee the uniqueness of the subsequent output given the leading input-output trajectory and the subsequent input, and observability is not needed for ensuring the uniqueness as long as the length of the initial trajectory is large enough, \emph{i.e.}, the Hankel matrix has a sufficient \textit{depth}. 2) Motivated by lack of controllability, we introduce a new notion of lifted excitation for the offline data collection, which has a similar flavor to \cite[Def. 1]{Berberich2022} that focuses on a special case of affine systems. With these two technical tools, we establish a direct data-driven representation in the form of \cref{eqn:DD-rep} for nonlinear systems that admit a Koopman linear embedding. This representation requires no knowledge of the lifting functions \cref{eq:Koopman-lifting} (as long as they exist). Our representation can be directly utilized in Koopman model predictive control \cite{korda2018linear}, without the need of identifying the linear model \cref{eqn:Koop-rep}. This bypasses the challenging problem of selecting the lifting functions, with the added remarkable benefit of eliminating the associated bias errors.

\section{From Koopman Linear Embeddings to Data-driven Representations}
\label{sec:equal-DD-Koop}
In this section, we develop the main technical result that directly represents the nonlinear system with Koopman linear embedding using its input and output data. 
We also discuss a special case of affine systems considered in~\cite{Berberich2022}.  

\subsection{Two behavior spaces}
\label{subsec:beh-space}
Consider the space of length-$L$  trajectories for the nonlinear system \cref{eqn:nonlinear-1} and the Koopman linear embedding~\cref{eqn:Koop-rep}:
\begin{subequations} \label{eq:T-trajectories-space}
  \begin{align}
        \mathcal{B}_1\!\!\mid_L &\! =\!\! \left\{\!\begin{bmatrix}
            u\\y
        \end{bmatrix}\!\in\! \mathbb{R}^{(m+p)L}  \mid  \exists\, x(0)\!=\!x_0 \! \in \! \mathbb{R}^n, \cref{eqn:nonlinear-1} \; \text{holds} \!\right\}\!, \label{eqn:traj-nonlinear}\\
         \mathcal{B}_2\!\!\mid_L &\! =\!\! \left\{\!\begin{bmatrix}
            u\\y
        \end{bmatrix}\! \in \! \mathbb{R}^{(m+p)L}  \mid  \exists\, z(0)\!=\!z_0 \! \in \! \mathbb{R}^{n_\z}, \cref{eqn:Koop-rep} \; \text{holds} \!\right\}\!. \label{eqn:traj-linear} 
    \end{align}
\end{subequations}
Note that $\mathcal{B}_1\!\!\mid_L $ is a nonlinear set while $\mathcal{B}_2\!\!\mid_L $ is a linear subspace in $\mathbb{R}^{(m+p)L}$. Intuitively, the behavior space of the Koopman linear embedding is larger than that of the original nonlinear system. Our first result characterizes the relationship between these two behavior spaces. 

\begin{theorem}
\label{prop:traj-rela}
Consider the nonlinear system \eqref{eqn:nonlinear-1} and assume it admits a Koopman linear embedding~\eqref{eqn:Koop-rep}.
\begin{enumerate}
    \item We have $\mathcal{B}_1\!\!\mid_L \; \subset \; \mathcal{B}_2\!\!\mid_L, \forall L \geq 1$, \emph{i.e.}, all trajectories of system \eqref{eqn:nonlinear-1} are also trajectories of~\eqref{eqn:Koop-rep};
    \item Let $\col(u,y) \in \mathcal{B}_2\!\!\mid_L$, where $L > n_\z$. Then, $\col(u,y) \in \mathcal{B}_1\!\!\mid_L$ if and only if its leading sequence of length $n_\z$ (\emph{i.e.}, $\col(u_{0:n_\z-1}, y_{0:n_\z-1})$) is a valid trajectory of~\eqref{eqn:nonlinear-1}.
\end{enumerate}
\end{theorem}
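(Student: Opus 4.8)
The plan is to handle the two items separately; item~1 and the ``only if'' part of item~2 are immediate, so the real work is the ``if'' direction of item~2.

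For item~1, I would take any $\col(u,y)\in\mathcal{B}_1\!\!\mid_L$ with underlying state trajectory $x_0,x_1,\dots$, set $z_k:=\Phi(x_k)$, and note that \Cref{def:accu-Koop} says exactly that $z_0,z_1,\dots$ satisfies \eqref{eqn:Koop-rep} with the same $(u,y)$, so $\col(u,y)\in\mathcal{B}_2\!\!\mid_L$. For the ``only if'' part of item~2, I would use that every prefix of a trajectory of \eqref{eqn:nonlinear-1} is again a trajectory of \eqref{eqn:nonlinear-1}. Before the converse, I would first record the pointwise consequence of \Cref{def:accu-Koop}: since the embedding propagates linearly along \emph{every} trajectory (hence from every initial state and every input), we have $\Phi(f(x,u))=A\Phi(x)+Bu$ and $g(x,u)=C\Phi(x)+Du$ for all $x\in\mathbb{R}^n$, $u\in\mathbb{R}^m$. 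This global identity is what will let me propagate a nonlinear state trajectory beyond the length-$n_\z$ window.

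For the ``if'' direction, suppose $\col(u,y)\in\mathcal{B}_2\!\!\mid_L$ is generated by some $z_0$, and its leading length-$n_\z$ block is generated by \eqref{eqn:nonlinear-1} from some $x_0$, with states $x_0,\dots,x_{n_\z-1}$. Lifting this nonlinear prefix, $\tilde z_k:=\Phi(x_k)$ also satisfies \eqref{eqn:Koop-rep} on that window with the same $(u,y)$, so the error $e_k:=z_k-\tilde z_k$ obeys $e_{k+1}=Ae_k$ and $Ce_k=0$ for $k=0,\dots,n_\z-1$, i.e. $CA^ke_0=0$ for $k=0,\dots,n_\z-1$. By Cayley--Hamilton this puts $e_0$ in the unobservable subspace $\mathcal{N}:=\bigcap_{k\ge0}\ker(CA^k)$ of $(C,A)$, which is $A$-invariant. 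I would then extend by $\hat x_0:=x_0$, $\hat x_{k+1}:=f(\hat x_k,u_k)$ (the first $n_\z$ of these coincide with $x_0,\dots,x_{n_\z-1}$ since $L>n_\z$), and set $\hat z_k:=\Phi(\hat x_k)$. By the global identity, $\hat z_k$ satisfies the same linear recursion $\hat z_{k+1}=A\hat z_k+Bu_k$ as $z_k$ and starts at $\Phi(x_0)=\tilde z_0$, so $\hat z_k-z_k=-A^ke_0\in\mathcal{N}$, whence $C\hat z_k=Cz_k$ and
\[
g(\hat x_k,u_k)=C\Phi(\hat x_k)+Du_k=C\hat z_k+Du_k=Cz_k+Du_k=y_k
\]
for all $k=0,\dots,L-1$. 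Hence $\col(u,y)$ is a length-$L$ trajectory of \eqref{eqn:nonlinear-1} from $x_0$, i.e. $\col(u,y)\in\mathcal{B}_1\!\!\mid_L$.

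The main obstacle I anticipate is that $(C,A)$ in \eqref{eqn:Koop-rep} need not be observable, so the leading length-$n_\z$ data does \emph{not} force $z_0=\Phi(x_0)$ — it only determines $z_0$ modulo the unobservable subspace. The argument survives precisely because that subspace is $A$-invariant, so the initial mismatch $e_0$, though possibly nonzero, never becomes visible through $C$ as the state is propagated; this is also why length $n_\z$ (rather than an observability-index length) is the right threshold. The only other point to handle carefully is the promotion of \Cref{def:accu-Koop} to the global identities $\Phi\circ f=A\Phi+B(\cdot)$ and $g=C\Phi+D(\cdot)$, which is what makes the forward extension $\hat x_{k+1}=f(\hat x_k,u_k)$ legitimate past the window where the states were already known.
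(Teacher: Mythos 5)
Your proof is correct and follows essentially the same route as the paper's: the paper also reduces the ``if'' direction to the fact that, for the LTI embedding, the future output is uniquely determined by a length-$n_\z$ prefix and the future input (its \Cref{prop:traj-linear}, proved in \Cref{append:lemma2} via the null space of the observability matrix and Cayley--Hamilton), applied to the given trajectory and the nonlinear continuation of the prefix. The only difference is presentational: you inline that uniqueness lemma as an explicit error-dynamics argument ($e_0$ in the $A$-invariant unobservable subspace), whereas the paper factors it out as a standalone statement about arbitrary LTI systems.
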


\Cref{prop:traj-rela} reveals that while the space $\mathcal{B}_2\!\!\mid_L $ is larger, we can characterize its subset corresponding to $\mathcal{B}_1\!\!\mid_L$ using the initial leading sub-sequence $\col(u_{0:n_\z-1}, y_{0:n_\z-1})$. We need a technical lemma to prove the second statement in \Cref{prop:traj-rela}. 

\begin{lemma}
\label{prop:traj-linear}
Consider an LTI system \cref{eqn:LTI-system}. Fix an initial trajectory $\col(u_{0:L_1-1}, y_{0:L_1-1}) \in \mathbb{R}^{(\tilde{m}+\tilde{p})L_1}$ of length-$L_1$, where $L > L_1 \geq \tilde{n}$. Given any subsequent input $u_{L_1:L-1} \in \mathbb{R}^{\tilde{m}(L-L_1)}$ (future input), the subsequent output $y_{L_1:L-1} \in \mathbb{R}^{\tilde{p}(L-L_1)}$ (future output)~is~unique.
\end{lemma}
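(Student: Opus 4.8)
The plan is to reduce the claim to the $A_1$-invariance of the unobservable subspace of \cref{eqn:LTI-system}: any two initial states consistent with the same length-$L_1$ input--output trajectory can differ only by a vector in that subspace, and such a difference can never influence a future output. I would stress at the outset that observability of \cref{eqn:LTI-system} is \emph{not} assumed, so the initial state itself is in general \emph{not} pinned down by $\col(u_{0:L_1-1}, y_{0:L_1-1})$; it is precisely the future output (not the state) that turns out to be unique, and the proof must exploit this distinction.

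First I would recall the explicit response of \cref{eqn:LTI-system}, $x_k = A_1^k x_0 + \sum_{j=0}^{k-1} A_1^{k-1-j} B_1 u_j$ and $y_k = C_1 x_k + D_1 u_k$. Since $\col(u_{0:L_1-1}, y_{0:L_1-1})$ is a trajectory, at least one compatible $x_0$ exists, which, propagated forward with the given future input $u_{L_1:L-1}$, yields a compatible future output; so only uniqueness is at stake. Let $x_0$ and $x_0'$ both be compatible with $\col(u_{0:L_1-1}, y_{0:L_1-1})$ and set $\delta := x_0 - x_0'$. The input contributions cancel in the difference of the two output responses, so matching outputs on $k = 0, \dots, L_1-1$ forces $C_1 A_1^k \delta = 0$ for $k = 0, \dots, L_1-1$.

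Next, because $L_1 \geq \tilde{n}$, the Cayley--Hamilton theorem upgrades $C_1 A_1^k \delta = 0$ for $k = 0,\dots,L_1-1$ to $C_1 A_1^k \delta = 0$ for \emph{all} $k \geq 0$, i.e., $\delta$ lies in the unobservable subspace $\mathcal{N} := \bigcap_{k\geq 0}\ker(C_1 A_1^k)$, which is easily seen to be $A_1$-invariant ($v \in \mathcal{N} \Rightarrow C_1 A_1^k (A_1 v) = C_1 A_1^{k+1} v = 0$). Then, for any fixed future input $u_{L_1:L-1}$, the two state trajectories started from $x_0$ and $x_0'$ with the common input sequence $u_{0:L-1}$ satisfy $x_k - x_k' = A_1^{k}\delta \in \mathcal{N}$ for every $k$, hence $y_k - y_k' = C_1(x_k - x_k') = 0$ for $k = L_1,\dots,L-1$. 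Thus $y_{L_1:L-1}$ does not depend on the choice of compatible initial state, which is exactly the asserted uniqueness.

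I do not anticipate a genuine obstacle; the only place requiring care is recognizing that uniqueness of the \emph{state} can fail and that the right invariant object is the unobservable subspace, together with the Cayley--Hamilton step that promotes the finite horizon $L_1 \geq \tilde{n}$ to the semi-infinite condition defining $\mathcal{N}$.
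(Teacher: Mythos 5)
Your proposal is correct and follows essentially the same route as the paper's proof: both reduce the claim to showing that any ambiguity in the initial state lies in $\mathrm{null}(\mathcal{O}_{L_1})$ and then use Cayley--Hamilton with $L_1 \geq \tilde{n}$ to conclude $\mathrm{null}(\mathcal{O}_{L_1}) \subseteq \mathrm{null}(\mathcal{O}_{L})$, so the unobservable difference never reaches the output. The only cosmetic difference is that you work with the difference $\delta$ of two compatible initial states while the paper parametrizes all compatible states as $x_p + \hat{x}$ with $x_p = \mathcal{O}_{L_1}^\dag v$.
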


Note that \Cref{prop:traj-linear} works for any LTI systems and requires no observability or controllability. This result is not difficult to establish and its proof is provided in \Cref{append:lemma2}. 

\textbf{Proof of \Cref{prop:traj-rela}:} 
   The first statement is obvious from \Cref{def:accu-Koop}. Let $\col(u,y) \in \mathcal{B}_1\!\!\mid_L$ be arbitrary. By definition, we can find $x_0 \in \mathbb{R}^n$ such that $\col(u,y)$ satisfies the evolution in \cref{eqn:nonlinear-1}. Then, with the lifted initial state 
   $z_0 = \Phi(x_0) \in \mathbb{R}^{n_\z},$ 
   $\col(u,y)$ satisfies the evolution in \cref{eqn:Koop-rep}. Thus, $\col(u,y) \in \mathcal{B}_2\!\!\mid_L$. 
    
    For the second statement, the ``only if'' part is trivial as $\col(u_{0:n_\z-1}, y_{0:n_\z-1})$ is part of $\col(u, y)$. We here prove ``if'' part. 
    Suppose $\col(u_{0:n_\z-1}, y_{0:n_\z-1}) \in \mathcal{B}_1\!\!\mid_{n_\z}$ and we let $\tilde{y}_{n_\z:L-1}$ be the corresponding outputs from the nonlinear system \cref{eqn:nonlinear-1} for the rest of inputs $u_{n_\z:L-1}$, \emph{i.e.},
    $$
    \col(u, y_{0:n_\z-1}, \tilde{y}_{n_\z:L-1}) \in \mathcal{B}_1\!\!\mid_{L}.
    $$
    Then, it is clear that $\col(u, y_{0:n_\z-1}, \tilde{y}_{n_\z:L-1})\in \mathcal{B}_2\!\!\mid_{L}$ utilizing the first statement. From \Cref{prop:traj-linear}, the outputs of the linear system \cref{eqn:Koop-rep} are uniquely determined by $u_{n_\z:L-1}$ when $\col(u_{0:n_\z-1}, y_{0:n_\z-1}) \in \mathcal{B}_2\!\!\mid_{n_\z}$ are given. Thus, we must have 
    $$
    \tilde{y}_{n_\z:L-1} = y_{n_\z:L-1},
    $$
    indicating the whole trajectory satisfies $\col(u,y)\!\in\!\mathcal{B}_1\!\!\mid_{L}$.
\hfill {\footnotesize $\blacksquare$}

\vspace{2pt}

We might be tempted to estimate an initial state $x_0$~or~$z_0$ from $\col(u_{0:n_\z-1}, y_{0:n_\z-1}) \in \mathcal{B}_1\!\!\mid_{n_\z}$. However, since~we~do~not assume observability of the Koopman linear embedding \Cref{eqn:Koop-rep}, the initial state $z_0$ cannot be uniquely determined from $\col(u_{0:n_\z-1}, y_{0:n_\z-1})$. As confirmed in \Cref{prop:traj-linear}, the unobservable part of the initial state does not affect the uniqueness of the input-output trajectory. If the Koopman linear embedding is observable with observability index $l_\ob$, the required length of the leading sequence can be decreased to $l_\ob$. In this case, the initial state $z_0$ can be estimated, and the uniqueness of the input-output trajectory is also guaranteed. The initial trajectory can thus be shorter as $T_\ini \ge l_\ob$ in \Cref{theorem:Koopman-to-data-representation}.

\subsection{Data-driven representation of nonlinear systems}
\label{subsec:DD-representation}
While the trajectory space of the Koopman linear embedding \Cref{eqn:Koop-rep} is larger than that of the nonlinear system \Cref{eqn:nonlinear-1}, we~can use the trajectories from~\Cref{eqn:nonlinear-1} (\emph{i.e.}, $\col(u_\D^i,y_\D^i) \in  \mathcal{B}_1\!\!\mid_{L}, i = 1, \ldots, l$) that are rich enough to  represent $\mathcal{B}_2\!\!\mid_{L}$. For this, we propose the following definition of lifted excitation. 
\begin{definition} \label{def:persistent-excitation-nonlinear}
Consider a nonlinear system  \eqref{eqn:nonlinear-1} with a Koopman linear embedding \eqref{eqn:Koop-rep}. We say $l$ trajectories of length-$L$ from \eqref{eqn:nonlinear-1}, $\col(u_\D^i,y_\D^i) \in  \mathcal{B}_1\!\!\mid_{L}, i = 1, \ldots, l$ with $l \geq mL + n_\z$ provide lifted excitation of order $L$, if the following matrix 
    \begin{equation} \label{eq:exitation-assumption}
    H_K := \begin{bmatrix}
        u_\D^1 & u_\D^2 & \ldots & u_\D^l \\
        \Phi(x_0^1) & \Phi(x_0^2) & \ldots & \Phi(x_0^l)
    \end{bmatrix} \in \mathbb{R}^{(mL + n_\z) \times l}
    \end{equation}
    has full row rank, where $x_0^i \in \mathbb{R}^n$ is the initial state for each trajectory $\col(u_\D^i,y_\D^i), i = 1, \ldots, l$.
\end{definition}

This notion of lifted excitation generalizes \cite[Def. 1]{Berberich2022}, that focuses only on affine systems. Our notion is suitable for any nonlinear system with a Koopman linear embedding. If \cref{eqn:Koop-rep} is controllable, we can design the inputs to ensure the lifted excitation. Given a single trajectory $u_\D, y_\D$ with length-$n_\D$, we obtain $l$ trajectories with length-$L$ using column vectors of $\col(\mathcal{H}_L(u_\D), \mathcal{H}_L(y_\D))$ where $l = n_\D-L+1$. If  $u_\D$ is PE of order $L+n_\z$, the $l$ trajectories above satisfy lifted excitation. If we collect multiple trajectories $u_\m^i, y_\m^i, i=1,\ldots, q$ (see \cite{van2020willems}), we require $u_\m^1, \ldots, u_\m^q$ to be collectively PE of order $L+n_\z$, i.e., 
$
\mathrm{rank}\big(\begin{bmatrix}
        \mathcal{H}_{L+n_z}(u_\m^1), \ldots, \mathcal{H}_{L+n_z}(u_\m^q)
    \end{bmatrix}\big) = m(L+n_z).
$   
The column vectors of $\col(\mathcal{H}_L(u_\m^i), \mathcal{H}_L(y_\m^i)),$ $i= 1,\ldots, q$ satisfies the lifted excitation in \Cref{def:persistent-excitation-nonlinear}.

\begin{theorem}
\label{prop:exist-traj-lib}
    Consider a nonlinear system  \eqref{eqn:nonlinear-1} with Koopman linear embedding \eqref{eqn:Koop-rep}. Suppose that $l$ trajectories of length-$L$ from \eqref{eqn:nonlinear-1}, $\col(u_\D^i,y_\D^i) \in  \mathcal{B}_1\!\!\mid_{L}, i = 1, \ldots, l$, satisfy lifted excitation of order $L$.
    Then, a length-$L$ $\col(u, y)$ is a valid trajectory of the Koopman linear embedding \eqref{eqn:Koop-rep}  if and only if there exists $g \in \mathbb{R}^l$ such that 
    $H_\D  g = 
        \col(u, y)$ 
where 
         \begin{equation} \label{eq:trajectory-library-Hankel}
     H_\D := \begin{bmatrix}
        u_\D^1 & u_\D^2 & \ldots & u_\D^l \\
        y_\D^1 & y_\D^2 & \ldots & y_\D^l
    \end{bmatrix} \in \mathbb{R}^{(m+p)L \times l}. 
    \end{equation}
\end{theorem}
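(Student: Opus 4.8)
The plan is to transfer the classical Willems' fundamental lemma argument into the lifted state space, using the lifted excitation condition in place of the usual persistency-of-excitation/full-rank Hankel hypothesis. First I would set up notation: for each sample trajectory $\col(u_\D^i, y_\D^i)$ let $z_0^i := \Phi(x_0^i)$ be its lifted initial state, so that (by the first statement of \Cref{prop:traj-rela}) $\col(u_\D^i, y_\D^i)$ is a trajectory of the Koopman linear embedding \cref{eqn:Koop-rep} with initial lifted state $z_0^i$. I would then recall the extended observability/Toeplitz representation of length-$L$ trajectories of \cref{eqn:Koop-rep}: there are matrices $\mathcal{O}_L \in \mathbb{R}^{pL \times n_\z}$ (built from $C, A$) and $\mathcal{T}_L \in \mathbb{R}^{pL \times mL}$ (built from $C, A, B, D$) such that a length-$L$ pair $\col(u, y)$ lies in $\mathcal{B}_2\!\!\mid_L$ if and only if $y = \mathcal{O}_L z_0 + \mathcal{T}_L u$ for some $z_0 \in \mathbb{R}^{n_\z}$. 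Equivalently, writing $M := \col(I_{mL}, \mathcal{T}_L) \in \mathbb{R}^{(m+p)L \times mL}$ and $P := \col(0, \mathcal{O}_L) \in \mathbb{R}^{(m+p)L \times n_\z}$, we have $\mathcal{B}_2\!\!\mid_L = \mathrm{range}\begin{bmatrix} M & P \end{bmatrix}$, and moreover every sample column satisfies $\col(u_\D^i, y_\D^i) = M u_\D^i + P z_0^i$.

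The core identity is then $H_\D = \begin{bmatrix} M & P \end{bmatrix} H_K$, where $H_K$ is the lifted-excitation matrix \cref{eq:exitation-assumption}. The ``if'' direction is immediate: if $\col(u, y) = H_\D g$ then $\col(u,y) = \begin{bmatrix} M & P\end{bmatrix}(H_K g)$, which is of the form $M u' + P z_0'$, hence a valid trajectory of \cref{eqn:Koop-rep} (the $u$-block forces $u' = u$). For the ``only if'' direction, take $\col(u, y) \in \mathcal{B}_2\!\!\mid_L$, so $\col(u,y) = M u + P z_0$ for some $z_0$. By the lifted excitation hypothesis, $H_K \in \mathbb{R}^{(mL + n_\z) \times l}$ has full row rank, so the map $g \mapsto H_K g$ is surjective onto $\mathbb{R}^{mL + n_\z}$; choose $g$ with $H_K g = \col(u, z_0)$. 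Then $H_\D g = \begin{bmatrix} M & P \end{bmatrix} H_K g = M u + P z_0 = \col(u, y)$, as desired.

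The main thing requiring care — and the step I expect to be the real obstacle — is justifying the surjectivity step cleanly, i.e.\ making sure the full-row-rank condition on $H_K$ is exactly what is needed and that nothing about controllability or observability of $(A,B)$, $(C,A)$ sneaks in. The point is that we never need to recover $z_0$ from data; we only need that \emph{every} pair $(u, z_0) \in \mathbb{R}^{mL} \times \mathbb{R}^{n_\z}$ is hit by some column combination of $H_K$, which is precisely full row rank of $H_K$, and then the linearity of the trajectory-generating map $\begin{bmatrix} M & P \end{bmatrix}$ does the rest. A secondary point worth stating explicitly is why $l \ge mL + n_\z$ is necessary for $H_K$ to possibly have full row rank (it is a dimension count), and, if desired, a remark connecting this back to the single-trajectory Hankel construction discussed before the theorem, where PE of order $L + n_\z$ of $u_\D$ yields the full rank of $H_K$ via the standard argument that the stacked Hankel matrix of $\col(u_\D, z_\D)$ has full row rank. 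I would close by noting that combining \Cref{prop:traj-rela} (to carve out $\mathcal{B}_1\!\!\mid_L$ inside $\mathcal{B}_2\!\!\mid_L$ via the leading length-$n_\z$ block) with this theorem gives the data-driven representation of \Cref{theorem:Koopman-to-data-representation}.
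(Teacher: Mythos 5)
Your proposal is correct and follows essentially the same route as the paper's proof: both factor the trajectory library as $H_\D = \begin{bmatrix} I_{mL} & \mathbb{0} \\ \mathcal{T}_L & \mathcal{O}_L \end{bmatrix} H_K$ and invoke the full row rank of $H_K$ to realize any pair $\col(u, z_0)$ as $H_K g$, with the converse following from linearity of $\mathcal{B}_2\!\!\mid_L$. The only cosmetic difference is that you name the two blocks $M$ and $P$ separately, which changes nothing substantive.
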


\vspace{5pt}

Due to the page limit, we postpone the proof to \Cref{append:them2}. \Cref{prop:exist-traj-lib} allows us to use the trajectories from the nonlinear system \cref{eqn:nonlinear-1} that provide lifted excitation of order $L$ (always exist; see \Cref{append:exist-traj-lib}) to represent any length-$L$ trajectory of the Koopman linear embedding \cref{eqn:Koop-rep}. Given trajectories of the nonlinear system, it is still an open problem to determine whether a Koopman linear embedding exists (see details in \Cref{append:dis-KLE}). Combining \Cref{prop:traj-rela,prop:exist-traj-lib} leads to a direct data-driven representation for a nonlinear system 
\cref{eqn:nonlinear-1} with Koopman linear embedding \cref{eqn:Koop-rep}, as we describe in our next result. Given a trajectory library $H_\D \!= \! \col(U_\D, Y_\D)$ in \Cref{eq:trajectory-library-Hankel}, where each column is a trajectory of length $L \!= \!T_\ini \! +\! N$ from the nonlinear system \cref{eqn:nonlinear-1}, we partition matrices $U_\D$ and~$Y_\D$ as 
\begin{equation}
\label{eqn:partition}
\begin{bmatrix}
    U_{\p} \\
    U_{\f} 
\end{bmatrix}  := U_\D, \quad
\begin{bmatrix}
    Y_{\p} \\
    Y_{\f} 
\end{bmatrix} := Y_\D,
\end{equation}
where $U_{\p}$ and $U_{\f}$ consist of the first $T_\textnormal{ini}$ rows and the last $N$ rows of $U_\D$, respectively (similarly for $Y_{\p}$ and $Y_{\f}$). 
\begin{theorem}
\label{theorem:Koopman-to-data-representation}
Consider a nonlinear system  \eqref{eqn:nonlinear-1} with a Koopman linear embedding \eqref{eqn:Koop-rep}. We collect a data library $H_\D$  in \cref{eq:trajectory-library-Hankel} with $l \geq mL +n_\z$ trajectories, whose length $L$ is $T_\ini + N$ and $T_\ini \geq n_\z$. Suppose these $l$ trajectories have lifted excitation of order $L$. At time $k$, denote the most recent input-output sequence $\col(u_{\textnormal{ini}}, y_\ini)$ with length-$T_{\textnormal{ini}}$ from \cref{eqn:nonlinear-1} as 
\[
 u_{\ini} = \col(u_{k-T_\ini}, \ldots,u_{k-1}), \quad
 y_{\ini} = \col(y_{k-T_\ini}, \ldots,y_{k-1}).
\]
For any future input $u_\f = \col(u_k, \ldots,u_{k+N-1})$, the sequence $\col(u_\ini, y_\ini, u_\f, y_\f)$ is a valid length-$L$ trajectory of \cref{eqn:nonlinear-1} if and only if there exists $g \in \mathbb{R}^{l}$ such that 
\begin{equation} \label{eq:data-representation-Koopman}
\col(U_\p, Y_\p, U_\f,  Y_\f) g
= 
\col(u_\ini, y_\ini, u_\f, y_\f). 
\end{equation}
\end{theorem}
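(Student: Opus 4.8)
The plan is to derive \Cref{theorem:Koopman-to-data-representation} by composing \Cref{prop:exist-traj-lib} (which relates the data equation to membership in $\mathcal{B}_2\!\!\mid_L$) with \Cref{prop:traj-rela} (which relates $\mathcal{B}_2\!\!\mid_L$ to $\mathcal{B}_1\!\!\mid_L$). The first step is purely notational: up to the same permutation of block rows on both sides, \cref{eq:data-representation-Koopman} is exactly the condition $H_\D g=\col(u,y)$ of \Cref{prop:exist-traj-lib}, where $\col(u,y)$ is the length-$L$ sequence with $u=\col(u_\ini,u_\f)$ and $y=\col(y_\ini,y_\f)$, and the $(\p,\f)$ partition in \cref{eqn:partition} just splits each Hankel-type block into its first $T_\ini$ and last $N$ time steps. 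I would state this equivalence once and then argue in terms of ``$\col(u_\ini,y_\ini,u_\f,y_\f)$ is a trajectory of \cref{eqn:nonlinear-1}'' versus ``$\exists g$ satisfying \cref{eq:data-representation-Koopman}.''

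For the ``only if'' direction, assume $\col(u_\ini,y_\ini,u_\f,y_\f)\in\mathcal{B}_1\!\!\mid_L$. By the first statement of \Cref{prop:traj-rela} it also lies in $\mathcal{B}_2\!\!\mid_L$. Since the $l\ge mL+n_\z$ offline trajectories have lifted excitation of order $L$, \Cref{prop:exist-traj-lib} yields $g\in\mathbb{R}^l$ with $H_\D g=\col(u_\ini,y_\ini,u_\f,y_\f)$, i.e., \cref{eq:data-representation-Koopman} holds.

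For the ``if'' direction, assume some $g\in\mathbb{R}^l$ satisfies \cref{eq:data-representation-Koopman}. Then $H_\D g=\col(u_\ini,y_\ini,u_\f,y_\f)$, so by \Cref{prop:exist-traj-lib} the sequence lies in $\mathcal{B}_2\!\!\mid_L$. To promote this to $\mathcal{B}_1\!\!\mid_L$ I would apply the second statement of \Cref{prop:traj-rela}: since $T_\ini\ge n_\z$ and $N\ge 1$ we have $L=T_\ini+N>n_\z$, so it suffices to verify that the leading length-$n_\z$ sub-sequence $\col(u_{k-T_\ini:k-T_\ini+n_\z-1},\,y_{k-T_\ini:k-T_\ini+n_\z-1})$ is a valid trajectory of \cref{eqn:nonlinear-1}. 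This is immediate from the hypothesis that $\col(u_\ini,y_\ini)$ is the genuine most recent length-$T_\ini$ input-output trajectory of \cref{eqn:nonlinear-1}: running \cref{eqn:nonlinear-1} from the corresponding state $x_{k-T_\ini}$ for $n_\z$ steps shows that any prefix of a trajectory of \cref{eqn:nonlinear-1} is again a trajectory, so the leading sub-sequence is in $\mathcal{B}_1\!\!\mid_{n_\z}$. \Cref{prop:traj-rela} then gives $\col(u_\ini,y_\ini,u_\f,y_\f)\in\mathcal{B}_1\!\!\mid_L$, completing the proof.

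The argument is essentially bookkeeping, so the only subtle point — and what I would flag as the main obstacle — is the bridge in the ``if'' direction: \Cref{prop:exist-traj-lib} only certifies membership in $\mathcal{B}_2\!\!\mid_L$, while \Cref{prop:traj-rela} additionally requires the leading length-$n_\z$ block to lie in $\mathcal{B}_1\!\!\mid_{n_\z}$; one has to notice that this block is not an arbitrary piece of a $\mathcal{B}_2$ trajectory but the actually measured initial data $\col(u_\ini,y_\ini)$, which is a nonlinear-system trajectory by assumption. I would also add a short remark that if the Koopman linear embedding \cref{eqn:Koop-rep} is observable with observability index $l_\ob$, the identical argument works with $n_\z$ replaced by $l_\ob$ throughout, justifying the relaxed requirement $T_\ini\ge l_\ob$ noted after \Cref{prop:traj-rela}.
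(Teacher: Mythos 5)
Your proposal is correct and follows essentially the same route as the paper: it combines \Cref{prop:exist-traj-lib} (equating \cref{eq:data-representation-Koopman}, up to a block-row permutation, with membership in $\mathcal{B}_2\!\!\mid_L$) with \Cref{prop:traj-rela} (promoting membership in $\mathcal{B}_2\!\!\mid_L$ to $\mathcal{B}_1\!\!\mid_L$ via the measured initial trajectory). You actually spell out the one point the paper leaves implicit --- that the leading length-$n_\z$ block is a genuine nonlinear-system trajectory because it is a prefix of the measured $\col(u_\ini,y_\ini)$ and $T_\ini\geq n_\z$ --- so no gap remains.
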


\vspace{2pt}
This result is a combination of \Cref{prop:exist-traj-lib,prop:traj-rela}. Due to the page limit, we provide some details in \Cref{append:them3}. \Cref{theorem:Koopman-to-data-representation} gives a direct data-driven representation~of~nonlinear systems with a Koopman linear embedding from~its~input and output data. This data-driven representation requires no knowledge of the lifting functions \cref{eq:Koopman-lifting} as long as they exist. Also, we do not require the Koopman linear embedding \eqref{eqn:Koop-rep} to be controllable or observable. Two key enablers for \Cref{theorem:Koopman-to-data-representation} are 1) our notion of lifted excitation for nonlinear systems in \cref{def:persistent-excitation-nonlinear} that enables \Cref{prop:exist-traj-lib}, and 2) a sufficiently long initial trajectory $\col(u_\ini, y_\ini)$ from the nonlinear system that ensures \Cref{prop:traj-rela}. When $n_\z$ is large, a great amount of data is needed to satisfy conditions for both enablers (see the effect of insufficient data in \Cref{append:them3}). 

We here remark that \Cref{theorem:Koopman-to-data-representation} illustrates the importance of increasing the \textit{width} and \textit{depth} of the trajectory library \cref{eq:trajectory-library-Hankel,eqn:partition}. While the benefits of increasing width are well-recognized in the literature, the importance of enlarging the depth has been overlooked. Collecting more trajectories to increase the width of \cref{eq:trajectory-library-Hankel} contributes to the lifted excitation condition (see \Cref{prop:exist-traj-lib}). On the other hand, fixing the prediction horizon $N$, a sufficient \textit{depth} ensures the initial trajectory is long enough in \cref{eqn:partition}, which guarantees that the trajectory in the space of the Koopman linear embedding is also a valid trajectory for the nonlinear system (see \Cref{prop:traj-rela}). Furthermore, \Cref{theorem:Koopman-to-data-representation} shows the required width and depth of the trajectory library depend on the ``hidden" dimension of the Koopman linear embedding of the nonlinear system. We note that estimating the Koopman order $n_z$ is non-trivial (see detailed discussion in \Cref{append:dis-KLE} and \Cref{append:nonexist-SC}). 

According to \Cref{theorem:Koopman-to-data-representation}, the data-driven representation \cref{eq:data-representation-Koopman} is equivalent to the Koopman linear embedding. This can be directly integrated with predictive control at each time $k$ as
\begin{equation}
\label{eqn:pred-controller}
\begin{aligned}
\min_{u_\f \in \mathcal{U}, y_\f} \quad &  \|u_\f\|_R^2 + \|y_\f-y_\R\|_Q^2 \\
\mathrm{subject~to} \quad & \eqref{eq:data-representation-Koopman}
\end{aligned}
\end{equation}
where $R \succ 0, Q \succeq 0$, $y_\R$ denotes the reference output trajectory, and $u_\f \in \mathcal{U}$ is the input constraint. For nonlinear systems with Koopman linear embedding, there is no need to use any lifting functions, which are instead required by most existing Koopman-based model predictive control approaches~\cite{korda2018linear,mauroy2020koopman}. 

\begin{remark}
    The literature on extending Willems’ fundamental lemma to nonlinear cases \cite{berberich2020trajectory,yuan2022data,markovsky2022data} requires system~dynamics, with additional constraints for their nonlinear structures. 
    When the nonlinearities exist in input or output (\emph{e.g.}, Hamerstein systems, Wiener systems), a change of variables is needed \cite{berberich2020trajectory}. Our data-driven representation \cref{eq:data-representation-Koopman} has no additional constraints and requires no knowledge of lifting functions. We only need a lifted excitation condition and a sufficiently long initial trajectory. The work \cite{lian2021koopman} integrates Willems' fundamental lemma with the learning of lifting~functions and \cite{lazar2024basis} forms the trajectory library with lifted states, which also requires learning lifting functions. In contrast, we show that learning these lifting functions is redundant for nonlinear systems with Koopman linear~embedding. 
\end{remark}

\subsection{A special case: Affine systems}
\label{subsec:}
Consider an affine system of the form 
\begin{equation}
    \label{eqn:affine-sys}
    \begin{aligned}
    x_{k+1} = Ax_k + Bu_k + e, \quad
    y_k  = Cx_k + D u_k + r,
    \end{aligned}
\end{equation}
where $x_k \in \mathbb{R}^{\bar{n}}$, $u_k \in \mathbb{R}^{\bar{m}}, y_k \in \mathbb{R}^{\bar{p}}$ are the state, input and output at time $k$, respectively and $e \in \mathbb{R}^{\bar{n}}, r \in \mathbb{R}^{\bar{p}}$ are two constant vectors. The result in \cite[Theorem 1]{Berberich2022} presents a data representation for~\cref{eqn:affine-sys}, which we reproduce below. 

\begin{theorem}[{\!\cite[Theorem 1]{Berberich2022}}]
\label{thm:fundamental-lemma-affine}
    Given the pre-collected trajectories $u_\D,x_\D,y_\D$ of \eqref{eqn:affine-sys}, suppose
    \begin{equation}\label{eq:persistent-excitation-affine}
        H_A \!:= \!\col(\mathcal{H}_L(u_\D),\! \mathcal{H}_1(x_{\D,0:n_\D-L}),\! \mathbb{1})
    \end{equation}
    has full row rank. Then, a length-$L$ input-output data sequence $\col(u_\ini, y_\ini, u_\f, y_\f)\in \mathbb{R}^{(\bar{m}+\bar{p})L}$ is a valid trajectory of \eqref{eqn:affine-sys} if and only if there exists $g \in \mathbb{R}^{n_\D-L+1}$ such that
    \begin{equation} 
\label{eq:data-representation-affine}
\col(
    U_\p, Y_\p, U_\f,  Y_\f) g= 
\col(u_\ini, y_\ini, u_\f, y_\f), \ \sum g = 1.
\end{equation}
\end{theorem}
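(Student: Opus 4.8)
The plan is to reduce the affine case to Willems' fundamental lemma for LTI systems (\Cref{thm:fundamental-lemma}) via a standard \emph{homogenization}. First I would pass to the augmented state $\tilde x_k := \col(x_k, 1) \in \mathbb{R}^{\bar n+1}$, under which \eqref{eqn:affine-sys} becomes the LTI system
\[
\tilde x_{k+1} = \underbrace{\begin{bmatrix} A & e \\ 0 & 1\end{bmatrix}}_{\tilde A}\tilde x_k + \underbrace{\begin{bmatrix} B \\ 0\end{bmatrix}}_{\tilde B}u_k, \qquad y_k = \underbrace{\begin{bmatrix} C & r\end{bmatrix}}_{\tilde C}\tilde x_k + D u_k,
\]
and record two facts. (i) A sequence $\col(u,y)$ is a trajectory of \eqref{eqn:affine-sys} iff it is a trajectory of $(\tilde A,\tilde B,\tilde C,D)$ whose initial augmented state has last entry equal to $1$: the bottom row of $\tilde A$ freezes that entry, while the top rows reproduce the affine recursion. (ii) The pre-collected $u_\D, x_\D, y_\D$ lifts to a trajectory of $(\tilde A,\tilde B,\tilde C,D)$ with state samples $\col(x_k,1)$, so its depth-one state Hankel matrix is $\mathcal H_1(\tilde x_{\D,0:n_\D-L}) = \col(\mathcal H_1(x_{\D,0:n_\D-L}), \mathbb{1})$ with $\mathbb{1}$ a row of ones; hence the full-row-rank hypothesis on $H_A$ in \eqref{eq:persistent-excitation-affine} is, up to a row permutation, exactly the full-row-rank hypothesis of \Cref{thm:fundamental-lemma} for this homogenized system.

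Next I would apply \Cref{thm:fundamental-lemma} to $(\tilde A,\tilde B,\tilde C,D)$ but with the output enlarged to $\bar y_k := \col\!\big([0\ \cdots\ 0\ 1]\,\tilde x_k,\ y_k\big)$, i.e.\ reading out the (constant) last coordinate of $\tilde x_k$ as an extra channel; this is a legitimate LTI output, and along the collected trajectory that channel is the identically-$1$ sequence, so its depth-$L$ Hankel block is an all-ones matrix. The lemma then yields: a length-$L$ sequence $\col(u, \mathbb{1}, y)$, with $\mathbb{1}$ the all-ones vector of length $L$ in the extra channel, is a trajectory of this augmented system iff there exists $g \in \mathbb{R}^{n_\D-L+1}$ with $\mathcal H_L(u_\D)\,g = u$, $\mathcal H_L(y_\D)\,g = y$, and (from the all-ones Hankel block, whose rows are identical) $\sum g = 1$.

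Finally I would close the loop. By fact (i), forcing the extra channel to be $1$ at every time step is exactly the condition that the initial augmented state has last entry $1$, so $\col(u, \mathbb{1}, y)$ is a trajectory of the augmented system precisely when $\col(u,y)$ is a trajectory of \eqref{eqn:affine-sys}; combining this with the previous equivalence, and collecting the input/output Hankel blocks into $\col(U_\p, Y_\p, U_\f, Y_\f)$ as in the statement, gives \Cref{thm:fundamental-lemma-affine}. The step requiring the most care is the choice of the extra output channel in the homogenized model: it is precisely reading out and then pinning to $1$ the constant coordinate of $\tilde x_k$ that produces the normalization $\sum g = 1$. Applying \Cref{thm:fundamental-lemma} to the homogenized system without this channel would only recover the trajectories of $(\tilde A,\tilde B,\tilde C,D)$ from an \emph{arbitrary} initial state --- a strictly larger set than the affine trajectories, with no normalization --- so this extra channel, together with the book-keeping identity for $\mathcal H_1(\tilde x_{\D,0:n_\D-L})$ in fact (ii), is what makes the reduction go through.
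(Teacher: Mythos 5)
Your proof is correct. Note, however, that the paper does not actually prove this statement: it is quoted verbatim from \cite[Theorem 1]{Berberich2022}, so there is no in-paper proof to compare against. What the paper does in the surrounding text is introduce exactly your homogenized system (their \eqref{eqn:linear-pred} is your $(\tilde A,\tilde B,\tilde C,D)$) but for a different purpose: to show that the affine system admits a Koopman linear embedding so that their \Cref{theorem:Koopman-to-data-representation} applies, yielding a representation with a \emph{longer} initial trajectory ($T_\ini \ge \bar n + 1$) and \emph{no} constraint $\sum g = 1$. Your argument instead derives the Berberich representation itself, and the step that does the real work --- adjoining the constant coordinate of $\tilde x_k$ as an extra output channel, whose depth-$L$ Hankel block is all ones so that pinning that channel to $1$ collapses to the single normalization $\sum g = 1$, and whose value at time $0$ pins the homogenizing state to $1$ --- is sound and is the right way to make \Cref{thm:fundamental-lemma} produce the affine constraint. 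Your bookkeeping identity $\mathcal H_1(\tilde x_{\D,0:n_\D-L}) = \col(\mathcal H_1(x_{\D,0:n_\D-L}), \mathbb{1})$ correctly matches the hypothesis on $H_A$ to the rank hypothesis of \Cref{thm:fundamental-lemma} for the homogenized system, and your observation that omitting the extra channel would only recover the strictly larger behavior of $(\tilde A,\tilde B,\tilde C,D)$ from arbitrary initial states is exactly the point the paper makes implicitly when contrasting the two representations. In short: a correct, self-contained proof of a result the paper only cites, built on the same lifting the paper uses for its own (different) affine result.
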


\vspace{3pt} 
A unique feature in \Cref{thm:fundamental-lemma-affine} is that the coefficient $g$ should have an affine constraint, since \cref{eqn:affine-sys} is affine. Here, we show that any affine system \cref{eqn:affine-sys} has an exact Koopman linear embedding, and thus our main \Cref{theorem:Koopman-to-data-representation} naturally applies. Choose a vector of lifting functions $z(x) = [\phi_1(x), \ldots, \phi_n(x), 1]^\tr$ where $\phi_i(x): \mathbb{R}^{\bar{n}} \rightarrow \mathbb{R}$ is the $i\textrm{-th}$ element of the state, \emph{i.e.}, $\phi_i (x)= x_i$. Then, we have a Koopman linear embedding for the affine system~\eqref{eqn:affine-sys} as  
\begin{equation}
\label{eqn:linear-pred}
    \begin{aligned}
        z_{k+1} \! = \! \begin{bmatrix}
            A & e \\ \mathbb{0} & 1
        \end{bmatrix} z_k+\begin{bmatrix}
            B \\ \mathbb{0}
        \end{bmatrix} u_k, \;\; 
        y_k \!=\! \begin{bmatrix}
            C & r 
        \end{bmatrix} z_k + D u_k. \\
    \end{aligned}
\end{equation}

Consequently, with the lifted excitation and an initial trajectory of length $\bar{n}+1$ in \Cref{theorem:Koopman-to-data-representation}, the data-representation \cref{eq:data-representation-Koopman} is necessary and sufficient for the behavior of \eqref{eqn:affine-sys}. In this case, we need a slightly longer initial trajectory, but no affine constraint on $g$ is needed. We note that the data matrices $H_A$ in \Cref{eq:persistent-excitation-affine} and $H_K$ in \Cref{eq:exitation-assumption} become the same, thus 
the lifted excitation and the data requirement in \cref{theorem:Koopman-to-data-representation,thm:fundamental-lemma-affine} are identical. Our data-representation in \Cref{theorem:Koopman-to-data-representation} works for any nonlinear systems with Koopman linear embedding while it is unclear how to design constraints on $g$ to extend \Cref{thm:fundamental-lemma-affine}. 
\begin{remark}
    General nonlinear systems may not admit a Koopman linear embedding, even in infinite dimensions \cite{iacob2024koopman}. However, Koopman linear models are widely used to approximate nonlinear dynamics and have demonstrated good performance in real applications when the approximation error is small \cite{haggerty2023control, Wang2023-DeeP-LCC,elokda2021data}. 
    In \cite{Berberich2022}, the affine data-driven representation \cref{eq:data-representation-affine} is continuously updated using the most recent trajectory, approximating the local linearization of the nonlinear system. It is an interesting direction to integrate \cref{theorem:Koopman-to-data-representation} with the approach in \cite{Berberich2022} for a broader class of nonlinear systems. 
\end{remark}

\vspace{-2mm}
\section{Numerical Experiments}
\label{sec:results}
In this section, we present numerical experiments to compare the prediction and control performance of four linear representations: 1) our proposed \underline{D}ata-\underline{D}riven \underline{K}oopman representation (\method{DD-K}) \eqref{eq:data-representation-Koopman}, 2) \underline{D}ata-\underline{D}riven \underline{A}ffine representation (\method{DD-A}) \eqref{eq:data-representation-affine}, 3) the standard Koopman linear approximation \eqref{eqn:Koop-rep} from EDMD~\eqref{eq:least-squares-model} (\method{EDMD-K}) and 4) the \underline{D}eep \underline{N}eural \underline{N}etwork \underline{K}oopman representation (\method{DNN-K}) in \cite{shi2022deep}.

\vspace{-2mm}
\subsection{Experiment Setup}
We consider the following nonlinear system 
\[
\begin{aligned}
\begin{bmatrix}
    x_{1,k+1} \\
    x_{2,k+1}
\end{bmatrix} & = 
\begin{bmatrix}
    0.99 x_{1,k} \\
    0.9 x_{2,k} + x_{1,k}^2 + x_{1,k}^3 + x_{1,k}^4 + u_k
\end{bmatrix},
\end{aligned}
\]
with output $y_k =  x_k$, and state $x = \col(x_1, x_2) \in \mathbb{R}^2$ and input $u \in \mathcal{U} := \begin{bmatrix}
    -5, 5
\end{bmatrix}$.
We choose the lifted state as $z := \col(x_1, x_2, x_1^2, x_1^3, x_1^4)$, and its Koopman linear embedding~is
\[
\begingroup
    \setlength\arraycolsep{2pt}
\def\arraystretch{0.85} 
\begin{aligned}
z_{k+1} & = \begin{bmatrix}
    0.99 & 0 & 0 & 0 & 0  \\
    0 & 0.9 & 1 & 1 & 1  \\
    0 & 0 & 0.99^2 & 0 & 0  \\
    0 & 0 & 0 & 0.99^3 & 0  \\
    0 & 0 & 0 & 0 & 0.99^4 
\end{bmatrix}z_k + \begin{bmatrix}
    0 \\ 1 \\ 0\\ 0\\ 0
\end{bmatrix}u_k, 
\end{aligned}
\endgroup
\]
and $y_k = z_k(1:2)$. This linear embedding is not controllable but has an observability index $4$. In our experiments, the prediction horizon is $N = 20$, and the lengths of the initial trajectory $\col(u_\ini, y_\ini)$ are $4$ and $2$ for \method{DD-K} and \method{DD-A} (the entire trajectory length $L$ is 24 for \method{DD-K} and 22 for \method{DD-A}), respectively. Then, we collect a single trajectory of length-$52$ and obtain $29$ length-$24$ trajectories for \method{DD-K}, which is the minimum necessary data length to make $H_K$ in \cref{eq:exitation-assumption} a square matrix. For the EDMD method, we simulate $200$ trajectories with $200$ time steps. The lifting functions are chosen to be the state of~\eqref{eqn:nonlinear-1} and $300$ thin plate spline radial basis functions whose center $x_0$ is randomly selected with uniform distribution from $\begin{bmatrix} -1, 1 \end{bmatrix}^2$ and has the form $\phi(x) = \|x-x_0\|_2^2\mathrm{log}(\|x-x_0\|_2)$. To learn the lifting functions with DNN, we collect $10^5$ trajectories with $100$ steps, and we used $2$ hidden layers with $64$ units to learn $32$ lifting functions (\emph{i.e.}, the DNN's output is $32$). The parameters in \cref{eqn:pred-controller} are set as $R = I_N$ and $Q = I_N \otimes \mathrm{diag}(0, 100)$, and the prediction model is replaced by \cref{eqn:Koop-rep} and \cref{eq:data-representation-affine} for \method{EDMD-K} and \method{DD-A}, respectively. A regularization $\mathrm{reg} = \lambda_g \|g\|_2 + \lambda_y \|\sigma_y\|_2$ is added for the \method{DD-A} with variables $g, \sigma_y$ and $\lambda_g  =400, \lambda_y = 2 \times 10^5$ to ensure feasibility and numerical stability.

\vspace{-2mm}
\subsection{Prediction and Control Performance}
\label{subsec:accurate-pred}
We first compare the prediction performance for the four methods, and we also illustrate our \method{DD-K} with different initial trajectory lengths. Given the future input $u_\f(k) = 5 \sin(\pi k/4)$, \Cref{fig:comp_Meth} displays results for the four methods. The predicted output trajectories of \method{DD-K} with different initial trajectory lengths are shown in \Cref{fig:comp_Ini}. As expected from \Cref{theorem:Koopman-to-data-representation}, the predicted trajectory from \method{DD-K} with $T_{\ini} = 4$ is the same as the true trajectory (see red and black dashed curves in \Cref{fig:pred_performance}). However, trajectories from \method{DD-A} and \method{EDMD-K} (see orange and blue curves in \Cref{fig:comp_Meth}) and the \method{DD-K} with initial trajectory length $2$ and $3$ (see purple and brown curves in \Cref{fig:comp_Ini}) deviate from the true trajectory. The trajectory from \method{DNN-K} is relatively close to the true trajectory (see the green curve in \Cref{fig:comp_Meth}) but its performance varies significantly for different pre-collected data sets. For \method{DD-A}, the affine constraint is inaccurate for this non-affine system. For \method{EDMD-K} and \method{DNN-K}, their lifting functions are not guaranteed to form an invariant space (cf.~\cite{haseli2021learning}), and both EDMD and DNN have approximation~errors.

\begin{figure}[t]
\centering
\setlength{\abovecaptionskip}{2pt}
\subfigure[Comparison of linear models]{\includegraphics[width=0.22\textwidth]{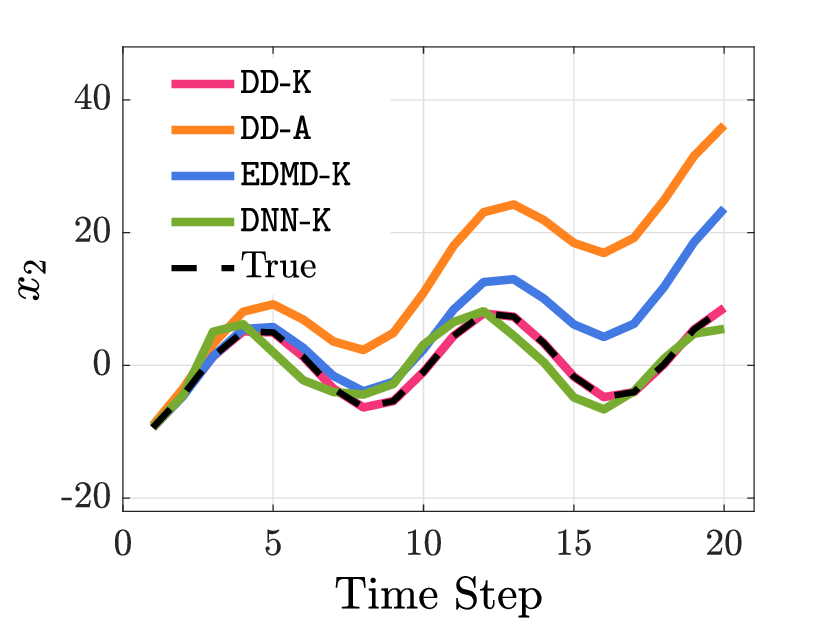} \label{fig:comp_Meth}}
\hspace{-1mm} \subfigure[Comparison of different $T_{\ini}$]{\includegraphics[width=0.22\textwidth]{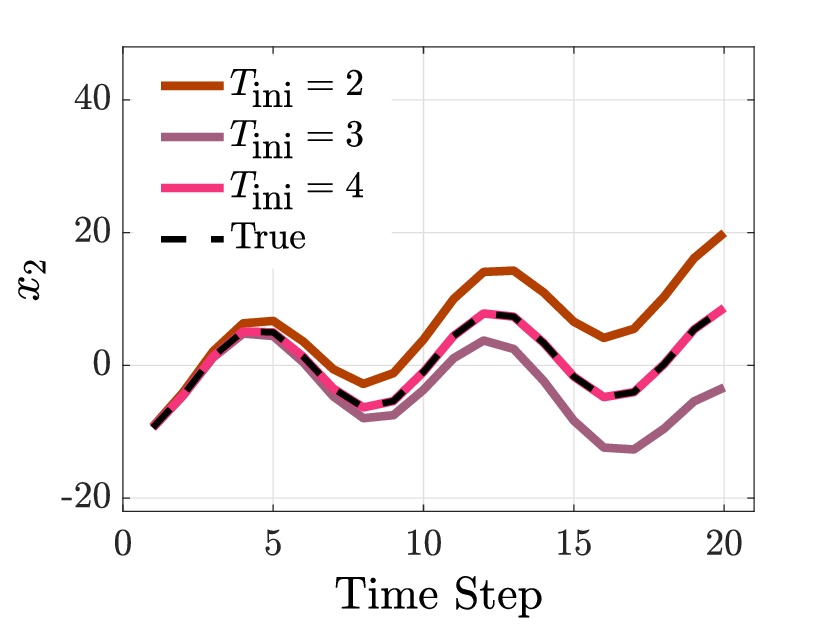} \label{fig:comp_Ini}} 
\caption{Prediction of $x_2$ with a given input $u_\f$. In (a) and (b), the red curve is the predicted trajectory of \method{DD-K} with $T_{\ini} = 4$. 
The orange, blue and green curves in (a) are \method{DD-A}, \method{EDMD-K} and \method{DNN-K}, and the brown and purple curves in (b) are \method{DD-K} with initial trajectory of $T_{\ini} = 2$ and~$3$.}
\label{fig:pred_performance}
\vspace*{-2ex}
\end{figure}

We next compare control performance of predictive controllers utilizing different linear representations. Our goal is to make $x_2$ track two types of reference trajectories: 1) Sinusoidal wave $y_{\R,k} =\col(0, 5\sin(\pi k / 30))$ and 2) Step signal $y_\R = \col(0, 5)$. We consider the realized control cost that is computed as $\|u^*\|_R^2 + \|y^*-y_r\|_Q^2$ where $u^*$ is the computed control input and $y^*$ is the actual trajectory after applying $u^*$. The results are shown in \Cref{fig:control}, and the realized control cost is averaged over $100$ data sets since the performance of these models is related to the pre-collected data. From \Cref{fig:comp_control}, we can observe the controller with \method{DD-K} can track the reference trajectory perfectly (see red and black dashed curves). \method{EDMD-K} and \method{DNN-K} can also track the reference trajectory closely and \method{DNN-K} converges faster to the reference trajectory (see green and blue curves) while applying \method{DD-A} has a longer transition phase. The realized control cost in \Cref{fig:comp_cost} further demonstrates \method{DD-A} $>$ \method{EDMD-K} $>$ \method{DNN-K} $>$ \method{DD-K} for both sinusoidal and step signals. We remark~that models obtained from DNN have large variances and we eliminated the problematic models whose realized control cost is larger than $10^5$. 
The inaccurate prediction of \method{EDMD-K}, \method{DNN-K} and \method{DD-A} (see \Cref{fig:comp_Meth}) leads to the tracking error. Finally, we note that both \method{DNN-K} and \method{EDMD-K} require an order of magnitude more data while failing to achieve the same performance as our method~\method{DD-K}, and that the performance of \method{DNN-K} is highly dependent on the pre-collected data set.

\section{Conclusions}
\label{sec:conclu}
We have developed an extended Willems' fundamental lemma for nonlinear systems that admit a Koopman linear embedding. Our results eliminate the non-trivial process of selecting lifting function and illustrate that the required size of the trajectory library is related to the dimension of the Koopman linear embedding. Future directions include developing data-driven models for nonlinear systems with approximated Koopman linear embeddings, analyzing the effect of adaptively updating the trajectory library as in~\cite{Berberich2022} and considering coupling nonlinearities between state and input  \cite{alsalti2023dataRob,haseli2023modeling}.

\begin{figure}[t]
\setlength{\abovecaptionskip}{2pt}
\subfigure[Tracking performance]{\includegraphics[width=0.22\textwidth]{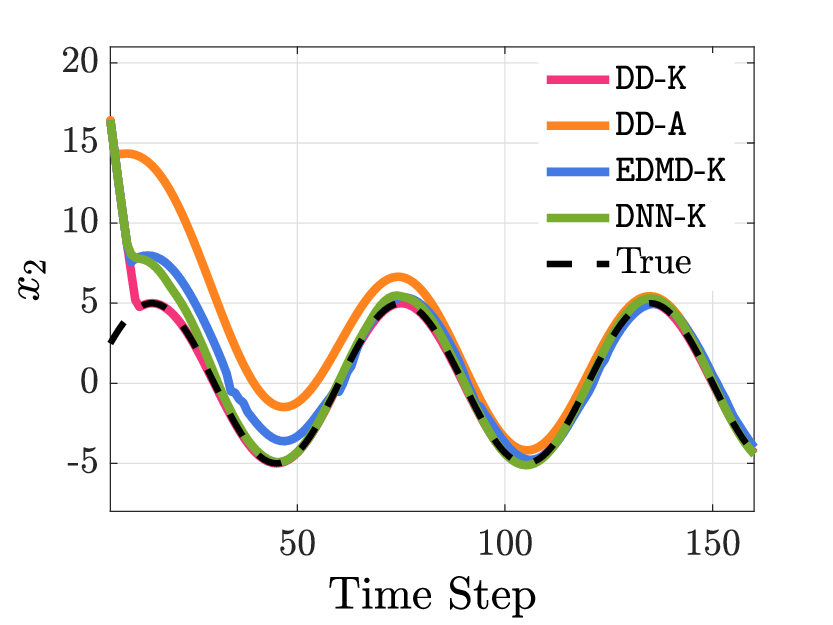}\label{fig:comp_control}}
\subfigure[Realized control cost]
{\includegraphics[width=0.22\textwidth]{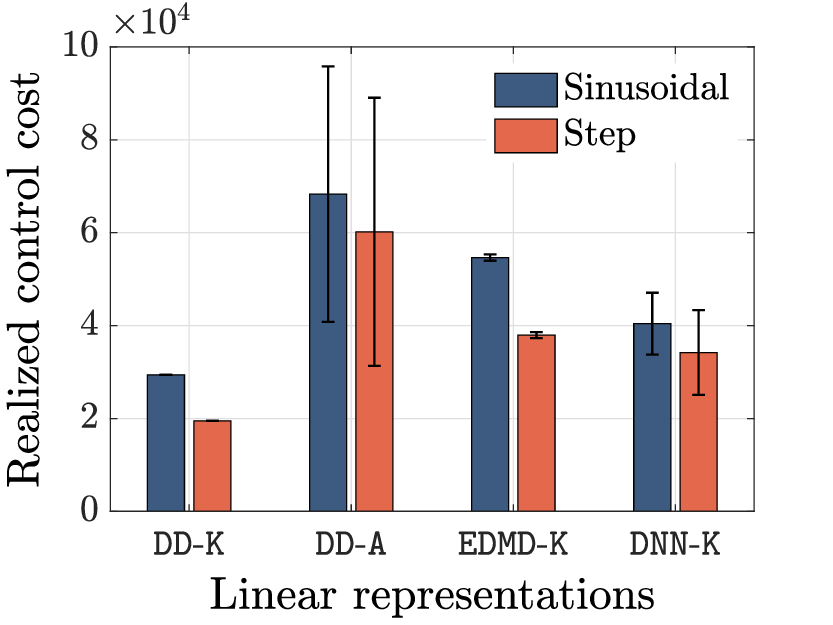}\label{fig:comp_cost}}
\caption{Control performance using different linear representations. (a) Tracking Performance. (b) Realized control costs. }
\label{fig:control}
\vspace*{-2ex}
\end{figure}

\appendix 

In this appendix, we provide proofs for \Cref{prop:traj-linear}, \Cref{prop:exist-traj-lib}, \Cref{theorem:Koopman-to-data-representation} as well as discuss the effect of data collection for applying \Cref{theorem:Koopman-to-data-representation}. We then prove the existence of a rich-enough trajectory library for \Cref{prop:exist-traj-lib}, introduce the approach and challenges in determining the existence and dimension of Koopman linear embedding. We finally propose a sufficient condition for the nonexistence of the Koopman linear embedding for a specific order.

\subsection{Technical proofs}
\subsubsection{Proof of \Cref{prop:traj-linear}}
\label{append:lemma2}
    As we do not assume the observability of the LTI system, we first derive the general form for all potential initial states $x_0$ for the given leading $L_1$ data sequence $\col(u_{0:L_1-1}, y_{0:L_1-1})$. We then present output trajectories which starts from different potential initial states $x_0$ with fixed input $u$ are the same.
    
    For the given leading $L_1$ data sequence, the output trajectory can be decomposed as follows:
   \[
   y_{0:L_1-1}=\mathcal{T}_{L_1}  u_{0:L_1-1} + \mathcal{O}_{L_1}x_0,
    \]
    where we have
    \[
    \begin{aligned}
\mathcal{T}_{L_1} &= 
\begin{bmatrix}
D & 0  & \cdots & 0 \\
C B & D & \cdots & 0 \\
CAB & CB & \cdots & 0\\
\vdots & \vdots  & \ddots & \vdots \\
CA^{L_1\!-\!2}B & CA^{L_1\!-\!3}B & \cdots & D \\ 
\end{bmatrix}\!,  \\
\mathcal{O}_{L_1}& = 
\begin{bmatrix}
    C\\
    CA\\
    \vdots\\
    CA^{L_1\!-\!1}
\end{bmatrix}.
    \end{aligned}
\]
Let $v = y_{0:L_1-1}-\mathcal{T}_{L_1} u_{0:L_1-1}$. Without loss of generality, the initial state can be represented as
\[
x_0 = x_p + \hat{x}
\]
where $x_p = \mathcal{O}_{L_1}^\dag v$ and $\hat{x} \in \mathrm{null}(\mathcal{O}_{L_1})$.

For the fixed input $u$,  the trajectory starts from the potential initial state can be derived as 
\[
\begin{aligned}
 y_{0:L-1}&=\mathcal{T}_{L}  u_{0:L-1} + \mathcal{O}_{L}x_0 \\
 &= \mathcal{T}_{L}  u_{0:L-1} + \mathcal{O}_{L} x_p + \mathcal{O}_{L} \hat{x}.   
\end{aligned}
\]
Since $\hat{x} \in \mathrm{null}(\mathcal{O}_{L_1})$, from the Cayley–Hamilton theorem, we have $\hat{x} \in \mathrm{null}(\mathcal{O}_{L})$. That leads to 
\[
y_{0:L-1} = \mathcal{T}_{L}  u_{0:L-1} + \mathcal{O}_{L} x_p,
\]
which completes the proof.

\subsubsection{Proof of \Cref{prop:exist-traj-lib}}
\label{append:them2}
    We here prove that, under the assumption of persistent excitation in \Cref{def:persistent-excitation-nonlinear}, the trajectory library \cref{eq:trajectory-library-Hankel} can fully capture the behavior of the Koopman linear embedding~\cref{eqn:Koop-rep}.
    
    Since all input-output trajectories of \eqref{eqn:nonlinear-1} are trajectories of \eqref{eqn:Koop-rep}, it is standard to derive 
    \begin{equation}
    \begin{bmatrix}
        u_\D^i  \\
        y_\D^i
    \end{bmatrix} = \begin{bmatrix}
        I_{mL} & \mathbb{0}_{mL \times n_\z} \\
        \mathcal{T}_L & \mathcal{O}_L
    \end{bmatrix} 
    \begin{bmatrix}
        u_\D^i\\
        z_0^i
    \end{bmatrix}, \qquad i = 1, \ldots, l
    \end{equation}
    where $z_0^i = \Phi(x_0^i)$ and 
    \[
    \begin{aligned}
    \mathcal{T}_L &= 
    \begin{bmatrix}
    D & 0 & \cdots & 0 \\
    CB & D  & \cdots & 0 \\
    CAB & CB  & \cdots & 0\\
    \vdots  & \vdots & \ddots & \vdots \\
    CA^{L-2}B & CA^{L-3}B  & \cdots & D \\ 
    \end{bmatrix}, \\
    \mathcal{O}_L &= 
    \begin{bmatrix}
        C\\
        CA\\
        \vdots\\
        CA^{L-1}
    \end{bmatrix}.
    \end{aligned}
    \]
    Thus, the trajectory library can be represented as 
\begin{equation} \label{eq:trajectory-library}
\begin{aligned}
H_\D &= \begin{bmatrix}
        I_{mL} & \mathbb{0}_{mL \times n_\z} \\
        \mathcal{T}_L & \mathcal{O}_L
    \end{bmatrix}
    H_K  \\&= \begin{bmatrix}
        I_{mL} & \mathbb{0}_{mL \times n_\z} \\
        \mathcal{T}_L & \mathcal{O}_L
    \end{bmatrix}\! 
    \begin{bmatrix}
        U_\D\\
        Z_0
    \end{bmatrix},    
\end{aligned}
\end{equation}
where we have 
$$U_\D = \begin{bmatrix}
     u_\D^1 & u_\D^2 & \ldots & u_\D^l
\end{bmatrix}$$ and 
$$Z_0 = \begin{bmatrix}
        \Phi(x_0^1)& \Phi(x_0^2) & \ldots & \Phi(x_0^{l}))
    \end{bmatrix}. $$

   First, it is clear that for any $g \in \mathbb{R}^l$, we have $H_\D g \in \mathcal{B}_2\!\!\mid_L$ since each column of $H_\D$ belongs to $\mathcal{B}_2\!\!\mid_L$ and $\mathcal{B}_2\!\!\mid_L$ is a linear subspace in $\mathbb{R}^{(m+p)L}$. 

   Second, let $\col(u,y) \in \mathcal{B}_2\!\!\mid_L$ be an arbitrary length-$L$ trajectory of  \eqref{eqn:Koop-rep}. We can write it as 
\begin{equation} \label{eq:any-trajectory}
    \begin{bmatrix}
        u \\ y
    \end{bmatrix} = 
    \begin{bmatrix}
        I_{mL} & \mathbb{0}_{mL \times n_\z} \\
        \mathcal{T}_L & \mathcal{O}_L
    \end{bmatrix} 
    \begin{bmatrix}
        u\\
        z_0
    \end{bmatrix},
\end{equation}
    with some initial state $z_0 \in \mathbb{R}^{n_\z}$. Since 
 $\col(U_\D, Z_0)$ has full row rank of $mL + n_\z$ by assumption, its column rank is also $mL + n_\z$ and thus there exists a $g\in \mathbb{R}^l$ such that 
 $$
 \col(U_\D, Z_0)g = \col(u, z_0).
 $$ 
 Substituting this into \cref{eq:trajectory-library,eq:any-trajectory}, we derive $ H_\D g = \col(u, y)$. This completes the proof. 
\subsubsection{Proof of \Cref{theorem:Koopman-to-data-representation}}
\label{append:them3}
We here demonstrate that we can construct an accurate data-driven linear representation for nonlinear systems that admit a Koopman linear embedding by combining \Cref{prop:exist-traj-lib,prop:traj-rela}.

Since the pre-collected data is persistently exciting of order $L$, \Cref{prop:exist-traj-lib} confirms that $\col(u_\ini, y_\ini, u_\f, y_\f)$ is a valid trajectory with length $L= T_\ini + N$ of the Koopman linear embedding \cref{eqn:Koop-rep} if and only if there exists a vector $g \in \mathbb{R}^{l}$ such that \Cref{eq:data-representation-Koopman} holds. In addition, \Cref{prop:traj-rela} guarantees that the length-$L$ trajectory $\col(u_\ini, y_\ini, u_\f, y_\f)$ of the Koopman linear embedding \eqref{eqn:Koop-rep}  is a valid trajectory of the nonlinear system \cref{eqn:nonlinear-1} if and only if $\col(u_\ini, y_\ini)$ is a trajectory of the nonlinear system \cref{eqn:nonlinear-1}, which readily holds. 
\begin{remark}
    We note that, when the nonlinear system exhibits rich behavior in its trajectory space, its associated Koopman linear embedding can have a high dimension which requires a large amount of data. If insufficient data is provided, the data-driven representation \cref{eq:data-representation-Koopman} can only capture a subspace of the true trajectory space and contain the approximation error for \cref{eqn:Koop-rep} which is similar to the analysis in \cite{alsalti2023dataRob} for utilizing infinite dimensional kernel functions to represent the nonlinearity of the system. This approach \cite{alsalti2023dataRob} has satisfactory control performance, illustrating that capturing only part of the trajectory space might be acceptable to apply Willems’ fundamental lemma-based data-driven representations in certain scenarios. We leave the perturbation analysis for our approach to future work.
\end{remark}

\subsection{Auxiliary technical discussions}
\subsubsection{Proof of the existence of trajectory set with lifted excitation of order $L$}
\label{append:exist-traj-lib}
Without loss of generality, we assume the Koopman linear embedding for the nonlinear system has linearly independent lifting functions. Then, we prove the trajectory set with lifted excitation of order $L$ always exists for the nonlinear systems that admits a Koopman linear embedding with linearly independent lifting functions.
\begin{lemma}
Suppose the nonlinear system \Cref{eqn:nonlinear-1} admits a Koopman linear embedding \Cref{eqn:Koop-rep} with linearly dependent lifting functions $\phi_1(\cdot), \ldots, \phi_{n_\z}(\cdot)$ which contains $r_\z$ linearly independent functions that are $\bar{\phi}_1(\cdot), \ldots, \bar{\phi}_{r_\z}$. Then, there exists another Koopman linear embedding associated with linearly independent lifting functions that are $\bar{\phi}_1(\cdot), \ldots, \bar{\phi}_{r_\z}$.
\end{lemma}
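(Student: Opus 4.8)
The plan is to read the hypothesis ``the $\phi_i$'s are linearly dependent, with maximal independent subset $\bar\phi_1,\dots,\bar\phi_{r_\z}$'' as an identity of \emph{functions}, encode it as a left‑invertible linear map between the two lifts, and then simply push the linear dynamics \eqref{eqn:Koop-rep} through that map.

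First I would make the dependence precise. Because $\bar\phi_1(\cdot),\dots,\bar\phi_{r_\z}(\cdot)$ is a maximal linearly independent subset of $\{\phi_1(\cdot),\dots,\phi_{n_\z}(\cdot)\}$ inside the vector space of real-valued functions on $\mathbb{R}^n$, it spans that set, so each $\phi_i$ is a fixed linear combination of $\bar\phi_1,\dots,\bar\phi_{r_\z}$ as a function. Collecting these coefficients row by row yields a matrix $T\in\mathbb{R}^{n_\z\times r_\z}$ with $\Phi(x)=T\bar\Phi(x)$ for all $x\in\mathbb{R}^n$, where $\bar\Phi(x):=\col(\bar\phi_1(x),\dots,\bar\phi_{r_\z}(x))$. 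Moreover $T$ has full column rank $r_\z$: if $Tc=0$ then $c^\tr\bar\Phi\equiv 0$ as a function, contradicting the linear independence of the $\bar\phi_j$ unless $c=0$.

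Next, since $T$ has full column rank it has a left inverse, e.g.\ $T^{\dag}:=(T^\tr T)^{-1}T^\tr$ with $T^{\dag}T=I_{r_\z}$, and applying $T^{\dag}$ to $\Phi=T\bar\Phi$ gives $\bar\Phi(x)=T^{\dag}\Phi(x)$ for all $x$. Now fix any trajectory of \eqref{eqn:nonlinear-1} and set $z_k=\Phi(x_k)$, $\bar z_k=\bar\Phi(x_k)=T^{\dag}z_k$. Using \eqref{eqn:Koop-rep},
\[
\bar z_{k+1}=T^{\dag}z_{k+1}=T^{\dag}(Az_k+Bu_k)=T^{\dag}AT\,\bar z_k+T^{\dag}B\,u_k,\qquad
y_k=Cz_k+Du_k=CT\,\bar z_k+D\,u_k .
\]
Hence, with $\bar A:=T^{\dag}AT$, $\bar B:=T^{\dag}B$, $\bar C:=CT$, $\bar D:=D$, the reduced lifted state $\bar z_k=\bar\Phi(x_k)$ propagates linearly along \emph{all} trajectories of \eqref{eqn:nonlinear-1} and $y_k$ is a linear map of $\bar z_k$ and $u_k$. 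Since $\bar\phi_1,\dots,\bar\phi_{r_\z}$ are linearly independent by construction, $\bar\Phi$ together with $(\bar A,\bar B,\bar C,\bar D)$ satisfies \Cref{def:accu-Koop}, i.e.\ it is a Koopman linear embedding (of reduced order $r_\z$), which is what we wanted.

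The argument is essentially bookkeeping once the right viewpoint is fixed, so there is no serious obstacle; the only points deserving a line of care are (i) interpreting the linear dependence as an identity of functions on $\mathbb{R}^n$ (or on the reachable set) rather than merely on collected data, which is what makes $\Phi=T\bar\Phi$ legitimate, and (ii) noting that the non-uniqueness of the left inverse $T^{\dag}$ is harmless because $T^{\dag}T=I_{r_\z}$ forces $T^{\dag}\Phi=T^{\dag}T\bar\Phi=\bar\Phi$ irrespective of the choice. Everything else is a direct substitution into \eqref{eqn:Koop-rep}.
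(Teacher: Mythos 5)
Your proof is correct and follows essentially the same route as the paper: both encode the dependence as $\Phi = T\bar\Phi$ with $T$ of full column rank and recover $\bar\Phi$ via a left inverse of $T$ before pushing \eqref{eqn:Koop-rep} through (the paper uses the row-selection matrix $P$ with $PT=I_{r_\z}$ where you use $T^{\dag}=(T^\tr T)^{-1}T^\tr$, an immaterial difference). Your explicit remarks that the dependence must hold as an identity of functions and that $T$ has full column rank are slightly more careful than the paper's write-up, but the argument is the same.
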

\begin{proof}
    Let $\bar{z}_k := \bar{\Phi}(x_k) := \col(\bar{\phi}_1(x_k),\ldots,\bar{\phi}_{r_\z}(x_k))$ and we have 
    \begin{gather*}
    \bar{z}_k = \bar{\Phi}(x_k) = P \Phi(x_k) = Pz_k, \\
    z_k =\Phi(x_k) = T \bar{\Phi}(x_k) = T\bar{z}_k, 
    \end{gather*}
    where $P$ is the permutation matrix and $T$ represents the linear dependence of $\Phi(x)$ and $\bar{\Phi}(x)$. Thus, we have
    \begin{gather*}
    \bar{z}_{k+1}  = Pz_{k+1} = PAz_k+PBu_k = PAT\bar{z}_k +PBu_k, \\
    y_k  =  C z_k + D u_k = C T\bar{z}_k + Du_k,
    \end{gather*}
    which completes our proof.
\end{proof}

\begin{proposition}
    Consider the nonlinear system \Cref{eqn:nonlinear-1} that admits a Koopman linear embedding \Cref{eqn:Koop-rep} with linearly independent lifting functions $\phi_1(\cdot), \ldots, \phi_{n_\z}(\cdot)$. There always exists a set of length-$L$ trajectories of the nonlinear system that provides lifted excitation of order $L$. 
\end{proposition}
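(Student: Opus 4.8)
The plan is to construct the $l = mL + n_\z$ trajectories explicitly. The argument rests on two observations. First, because \cref{eqn:nonlinear-1} is a deterministic recursion, \emph{any} initial state $x_0 \in \mathbb{R}^n$ together with \emph{any} input sequence $\col(u_0,\ldots,u_{L-1}) \in \mathbb{R}^{mL}$ generates a well-defined length-$L$ trajectory, so the resulting $\col(u,y)$ lies in $\mathcal{B}_1\!\!\mid_L$; in other words, we may prescribe the inputs and the initial states of our library completely freely. Second, linear independence of $\phi_1(\cdot),\ldots,\phi_{n_\z}(\cdot)$ forces $\mathrm{span}\{\Phi(x) : x \in \mathbb{R}^n\} = \mathbb{R}^{n_\z}$: if this span were contained in a proper subspace, there would exist a nonzero $c \in \mathbb{R}^{n_\z}$ with $c^\tr \Phi(x) = \sum_i c_i \phi_i(x) = 0$ for all $x$, contradicting linear independence of the $\phi_i$. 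Hence one can pick points $\xi_1,\ldots,\xi_{n_\z} \in \mathbb{R}^n$ greedily (at each step choosing a new $\xi_{j+1}$ with $\Phi(\xi_{j+1})$ outside $\mathrm{span}\{\Phi(\xi_1),\ldots,\Phi(\xi_j)\}$, which is possible since the full span is $\mathbb{R}^{n_\z}$) so that $\Psi := [\Phi(\xi_1), \ldots, \Phi(\xi_{n_\z})] \in \mathbb{R}^{n_\z \times n_\z}$ is invertible.

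Next I would assemble the library. Fix any $x_0^\star \in \mathbb{R}^n$, let the first $mL$ trajectories start from $x_0^\star$ with inputs equal to the $mL$ standard basis vectors of $\mathbb{R}^{mL}$, and let the last $n_\z$ trajectories have zero input and initial states $\xi_1,\ldots,\xi_{n_\z}$. All $l = mL + n_\z$ of these lie in $\mathcal{B}_1\!\!\mid_L$ by the first observation, and the matrix $H_K$ of \cref{eq:exitation-assumption} takes the block form
\[
H_K = \begin{bmatrix} I_{mL} & \mathbb{0}_{mL \times n_\z} \\ \Phi(x_0^\star)\,\mathbb{1}_{mL}^\tr & \Psi \end{bmatrix},
\]
with $\mathbb{1}_{mL}$ the all-ones vector. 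Subtracting a suitable linear combination of the first $mL$ rows from each of the last $n_\z$ rows — an operation that leaves the row space unchanged, and possible precisely because the top block is $[\,I_{mL}\ \ \mathbb{0}\,]$ — clears the lower-left block $\Phi(x_0^\star)\,\mathbb{1}_{mL}^\tr$ and yields the block-triangular matrix $\mathrm{blkdiag}(I_{mL}, \Psi)$, which is invertible. Therefore $H_K$ has full row rank $mL + n_\z$, so these $l$ trajectories provide lifted excitation of order $L$; appending further trajectories preserves full row rank, covering the case $l > mL + n_\z$.

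The only non-routine ingredient is the span identity in the first paragraph — that linear independence of the scalar observables propagates to linear independence of finitely many sampled lifted vectors — and, as indicated, this is an elementary consequence of the definition of linear independence of functions rather than a genuine obstacle; the remainder is linear-algebra bookkeeping. I would also add a brief remark that \cref{def:persistent-excitation-nonlinear} only requires $l$ separate trajectories (not a Hankel-structured single long trajectory), so no additional persistency argument on a scalar input signal is needed for mere existence; the Hankel-based route discussed around \cref{eq:exitation-assumption} is a sufficient but stronger construction.
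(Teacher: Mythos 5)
Your proof is correct and follows essentially the same route as the paper's: both hinge on the fact that linear independence of the observables forces $\mathrm{span}\{\Phi(x):x\in\mathbb{R}^n\}=\mathbb{R}^{n_\z}$, established by the identical contradiction/greedy-selection argument, after which the inputs are chosen freely. Your explicit block construction of $H_K$ (identity inputs from a common initial state plus zero-input trajectories from the $\xi_j$) simply makes concrete the step the paper disposes of with the remark that the input sequence can be arbitrarily designed.
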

\begin{proof}
    We aim to demonstrate there exist input-state-output trajectories that can make the matrix $H_K$ in \cref{eq:exitation-assumption} to be full row rank. It is equivalent to prove that there exists $x_1, \ldots, x_l \in \mathbb{R}^n$ ($n$ is the state dimension of the nonlinear system) such that 
    \[
    M := \begin{bmatrix}
        \Phi(x_1) & \Phi(x_2) & \cdots & \Phi(x_l)
    \end{bmatrix}
    \]
    has full row rank as the input of the sequence can be arbitrarily designed. We can first randomly sample $m>0$ points $x_1, \ldots, x_m \in \mathbb{R}^n$ and formulate the matrix $M = \begin{bmatrix}
        \Phi(x_1) & \Phi(x_2) & \cdots & \Phi(x_m)
    \end{bmatrix}$. We shall illustrate that there exists an $\bar{x} \in \mathbb{R}^n$ with the associated $\Phi(\bar{x})$ such that $\Phi(\bar{x})$ is not in the range space of $M$ if $M$ does not have full row rank. Thus, we can insert $\Phi(\bar{x})$ as a column of $M$ to increase the rank of $M$ until its reaches $n_\z$.

    Let $r < n_\z$ be the rank of $M$ and $\mathcal{N}$ be the left kernel space of $M$. $\mathcal{N}\setminus \{\mathbb{0}\}$ is not empty because $M$ does not have full row rank. We can choose $v \in \mathcal{N} \setminus \{\mathbb{0}\}$ and there exists $\bar{x} \in \mathbb{R}^n$ such that $v^\tr \Phi(\bar{x}) \neq 0$, otherwise it is contradictory with the condition $\phi_1, \ldots, \phi_n$ are linearly independent. Thus, $\Phi(\bar{x})$ is not in the range space of $M$ which indicates the rank of $\begin{bmatrix}
        \Phi(x_1)& \ldots & \Phi(x_m) & \Phi(\bar{x})
    \end{bmatrix}$ is increased to $r+1$. By iteratively conducting this procedure, we can formulate a matrix $M$ with full row rank.
\end{proof}

\subsubsection{Existence of Koopman linear embedding with dimension estimation}
\label{append:dis-KLE}
We note that given the trajectories of the nonlinear system, it is still an open problem to determine the existence of the Koopman linear embedding. It is necessary that 1) there exists an LTI system whose trajectory space is spanned by rich-enough trajectories of the nonlinear system and 2) there exists a series of associated lifting functions for bridging the origin nonlinear system and the LTI system. The first condition is studied in \cite{markovsky2022identifiability}. If trajectories of infinite length are given, one can determine the existence of the LTI system by checking the rank increasing of the Hankel matrix with its order $L$ while the existence of the lifting function remains unclear, and we leave it for our future work. On the other hand, the existence of the LTI system can not be determined with trajectories of finite length, as there always exists a trivial solution and the trajectory with finite length may not fully capture the behavior of the nonlinear system.

If the existence of the Koopman linear embedding and a sufficiently long trajectory are provided, its dimension can be obtained via utilizing the algorithm provided in [25, Algorithm 1]. The algorithm iteratively increases the depth of the trajectory library and searches its left kernel space to construct a kernel representation for the Koopman linear embedding. The dimension of the Koopman linear embedding can be obtained from the kernel representation and we consider it as ``hidden" dimension of the nonlinear system which determines the richness of the behavior of the nonlinear system. We refer interested readers to \cite{markovsky2022identifiability} for details.

\subsubsection{Sufficient condition for the nonexistence of $\bar{n}_\z$ dimensional Koopman linear embedding}
\label{append:nonexist-SC}
It is generally difficult to determine the order (upper bound) of the Koopman linear embedding for the nonlinear system as it is challenging to determine the existence of the Koopman linear embedding and requires rich enough data. We here provide a sufficient condition for the nonexistence of the Koopman linear embedding that is smaller than a specific~order. 
\begin{proposition}
    Given the input-output trajectory $u_\D, y_\D$ of the nonlinear system \cref{eqn:nonlinear-1} with length $n_\D$ and construct a trajectory library $H_\D = \col(\mathcal{H}_L(u_\D), \mathcal{H}_L(y_\D))$ with order $L$ where $L \le n_\D$. If $\mathrm{rank}(H_{\D}) > mL+\Bar{n}_{\z}$, then the Koopman linear embedding with an order no larger than $\bar{n}_{\z}$ does not~exist.
\end{proposition}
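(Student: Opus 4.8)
The plan is to argue by contraposition: assume a Koopman linear embedding of order at most $\bar n_\z$ exists, and show this forces $\mathrm{rank}(H_\D) \le mL + \bar n_\z$, contradicting the hypothesis. So suppose the nonlinear system \cref{eqn:nonlinear-1} admits a Koopman linear embedding \cref{eqn:Koop-rep} with lifted state dimension $n_\z \le \bar n_\z$ (by the first lemma in \Cref{append:exist-traj-lib} we may take the lifting functions to be linearly independent without loss of generality). The key structural fact, already used in the proof of \Cref{prop:exist-traj-lib}, is that every length-$L$ trajectory of the nonlinear system factors through the linear embedding, so that each column of the Hankel trajectory library can be written in terms of an input block and a lifted-state block.

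Concretely, I would reuse the factorization from \Cref{eq:trajectory-library}: writing the single pre-collected trajectory's Hankel matrix as $H_\D = \col(\mathcal{H}_L(u_\D), \mathcal{H}_L(y_\D))$, and letting $z_j := \Phi(x_j)$ denote the lifted state along the trajectory, we have
\[
H_\D = \begin{bmatrix} I_{mL} & \mathbb{0}_{mL \times n_\z} \\ \mathcal{T}_L & \mathcal{O}_L \end{bmatrix}
\begin{bmatrix} \mathcal{H}_L(u_\D) \\ \mathcal{H}_1(z_{0:n_\D - L}) \end{bmatrix},
\]
where $\mathcal{T}_L$ and $\mathcal{O}_L$ are the Toeplitz and observability matrices of \cref{eqn:Koop-rep} as defined in \Cref{append:them2}, and $\mathcal{H}_1(z_{0:n_\D-L}) = \begin{bmatrix} z_0 & z_1 & \cdots & z_{n_\D - L}\end{bmatrix}$ has $n_\z$ rows. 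Then by subadditivity/monotonicity of rank under matrix products,
\[
\mathrm{rank}(H_\D) \le \mathrm{rank}\!\begin{bmatrix} \mathcal{H}_L(u_\D) \\ \mathcal{H}_1(z_{0:n_\D-L}) \end{bmatrix} \le mL + n_\z \le mL + \bar n_\z.
\]
This is the desired contradiction with $\mathrm{rank}(H_\D) > mL + \bar n_\z$, completing the contrapositive argument.

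I would then add a short remark that the linear-independence reduction is needed only to make the bound tight in terms of the ``true'' order; for the nonexistence statement itself one only needs that \emph{some} embedding of dimension $\le \bar n_\z$ produces the factorization, and any linearly dependent set of $\bar n_\z$ lifting functions still yields a $\bar n_\z$-row lifted-state block, so the rank bound $mL + \bar n_\z$ holds verbatim. The main (and only real) obstacle is bookkeeping: one must be careful that the Hankel trajectory library is built from a \emph{single} trajectory of length $n_\D$ (so the columns are windows $\col(u_{j:j+L-1}, y_{j:j+L-1})$, $j = 0, \ldots, n_\D - L$) and that each such window is generated by the same linear embedding from the lifted state $z_j$ at the window's start — this is exactly the content of the first half of the proof of \Cref{prop:exist-traj-lib}, so no new machinery is required. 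Everything else is a one-line rank inequality, so this proof is genuinely short.
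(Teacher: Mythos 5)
Your proposal is correct and matches the paper's own argument: both prove the contrapositive by factoring $H_\D$ through the block matrix $\bigl[\begin{smallmatrix} I_{mL} & \mathbb{0} \\ \mathcal{T}_L & \mathcal{O}_L \end{smallmatrix}\bigr]$ applied to $\col(\mathcal{H}_L(u_\D), Z_0)$ and bounding the rank by the $mL + n_\z$ rows of the right factor. Your additional remark that linear independence of the lifting functions is not needed for the rank bound is a small but valid refinement over the paper's presentation.
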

\begin{proof}
    We prove this proposition by showing $\mathrm{rank}(H_{\D}) \le mL+\bar{n}_\z$ is a necessary condition for the existence of the $n_\z$ dimensional Koopman linear predictor with $n_\z \le \bar{n}_\z$. If the $n_z$ dimensional linear predictor exists , it can be represented as   
    \[
        z_{k+1} = Az_k + Bu_k, \quad y_k = Cz_k + Du_k,
    \] 
    and $A \in \mathbb{R}^{n_\z \times n_\z}, B \in \mathbb{R}^{n_\z \times m}, C \in \mathbb{R}^{p\times n_\z}$ and $D \in \mathbb{R}^{p \times m}$. It is natural to consider each column of $H_\D$ as a length-$L$ trajectory of the Koopman linear embedding and derive $H_{\D}$ as 
    \[
    H_\D = \begin{bmatrix}
        I_{mL} & \mathbb{0}_{mL \times n_\z} \\
        \mathcal{T}_L & \mathcal{O}_L
    \end{bmatrix}\! 
    \begin{bmatrix}
        \mathcal{H}_L(u_\D)\\
        Z_0
    \end{bmatrix},
    \]
    where $Z_0$ represents the initial states of each trajectory (\emph{i.e.}, column vector of $H_\D$) and we have
    \[
    \mathcal{T}_L = 
    \begin{bmatrix}
    D & 0 & \cdots & 0 \\
    CB & D  & \cdots & 0 \\
    CAB & CB  & \cdots & 0\\
    \vdots  & \vdots & \ddots & \vdots \\
    CA^{L-2}B & CA^{L-3}B  & \cdots & D \\ 
    \end{bmatrix}, \quad \mathcal{O}_L = 
    \begin{bmatrix}
        C\\
        CA\\
        \vdots\\
        CA^{L-1}
    \end{bmatrix}.
    \]
    Since $\mathrm{rank}(H_\D) \le \mathrm{rank}(\col(\mathcal{H}_L(u_\D), Z_0)) \le mL+n_z \le mL+\bar{n}_z$, our proof is complete.
\end{proof}

\bibliographystyle{IEEEtran}
\bibliography{ref}

\end{document}